\theoremstyle{plain}
\newtheorem{theorem}{Theorem}[section]
\newtheorem{lemma}[theorem]{Lemma}
\newtheorem{proposition}[theorem]{Proposition}
\newtheorem{corollary}[theorem]{Corollary}
\theoremstyle{definition}
\numberwithin{equation}{section}
\DeclareMathOperator{\spec}{Spec}
\DeclareMathOperator{\L-spec}{L-Spec}
\DeclareMathOperator{\Q-spec}{Q-Spec}
\DeclareMathOperator{\cent}{Cent}
\newcommand{\bnum}{\begin{enumerate}}
\newcommand{\enum}{\end{enumerate}}
\begin{document}

\title{On super integral groups}
\author[J. Dutta and R. K. Nath]{Jutirekha Dutta  and Rajat Kanti Nath$^*$}
\address{Department of Mathematical Sciences, Tezpur
University,  Napaam-784028, Sonitpur, Assam, India.
}
\email{jutirekhadutta@yahoo.com, rajatkantinath@yahoo.com$^*$}

\subjclass[2010]{20D99, 05C50, 15A18, 05C25.}
\keywords{commuting graph, spectrum, integral graph, finite group.}

%

\thanks{*Corresponding author}

%
%
\begin{abstract}
A finite non-abelian group $G$ is called super integral if the spectrum, Laplacian spectrum and signless Laplacian spectrum of its commuting graph contain only integers. In this paper, we first compute various spectra of  several families of finite non-abelian groups and conclude that those groups are super integral. As an application  of our results we obtain some positive integers $n$ such that  $n$-centralizer groups are super integral. We also obtain some positive rational numbers $r$ such that $G$ is super integral if it has commutativity degree $r$. In the last section, we show that  $G$ is super integral if $G$ is not isomorphic to $S_4$ and  its commuting graph is planar. We conclude the paper showing that  $G$ is super integral if its commuting graph is toroidal.   
\end{abstract}

\maketitle

%
%

\section{Introduction} \label{S:intro}

Let $A({\mathcal{G}})$ and $D({\mathcal{G}})$ denote the adjacency matrix  and degree matrix of a graph  ${\mathcal{G}}$ respectively. Then the Laplacian matrix and   signless Laplacian matrix of ${\mathcal{G}}$ are given by  $L({\mathcal{G}})  = D({\mathcal{G}}) - A({\mathcal{G}})$ and    $Q({\mathcal{G}}) = D({\mathcal{G}}) + A({\mathcal{G}})$ respectively. We write  $\spec({\mathcal{G}})$, $\L-spec({\mathcal{G}})$ and $\Q-spec({\mathcal{G}})$ to denote the spectrum, Laplacian spectrum and Signless Laplacian spectrum of ${\mathcal{G}}$. Also, $\spec({\mathcal{G}}) = \{\alpha_1^{a_1}, \alpha_2^{a_2}, \dots, \alpha_l^{a_l}\}$, $\L-spec({\mathcal{G}}) = \{\beta_1^{b_1}, \beta_2^{b_2}, \dots, \beta_m^{b_m}\}$ and $\Q-spec({\mathcal{G}}) = \{\gamma_1^{c_1}, \gamma_2^{c_2}, \dots, \gamma_n^{c_n}\}$ where $\alpha_1,  \alpha_2, \dots, \alpha_n$ are the eigenvalues of   $A({\mathcal{G}})$ with multiplicities $a_1, a_2, \dots, a_l$;   $\beta_1,  \beta_2, \dots, \beta_m$ are the eigenvalues of  $L({\mathcal{G}})$ with multiplicities $b_1, b_2, \dots, b_m$ and  $\gamma_1, \gamma_2, \dots, \gamma_n$ are the eigenvalues of   $Q({\mathcal{G}})$ with multiplicities $c_1, c_2, \dots, c_n$ respectively.

A graph $\mathcal{G}$ is called  integral or L-integral or  Q-integral according as  $\spec({\mathcal{G}})$ or $\L-spec({\mathcal{G}})$ or $\Q-spec({\mathcal{G}})$ contains only integers.
The notion of integral graph was introduced by  Harary and  Schwenk \cite{hS74} in the year 1974. A very impressive survey on integral graphs can be found in \cite{bCrS03}.
 Ahmadi et al. noted  that   integral graphs have some interests for designing the network topology of perfect state transfer networks, see  \cite{anb09} and the references there in. L-integral graphs are also  studied extensively over the years while $Q$-integral graphs are not studied much. One may conf. \cite{Abreu08, Simic07,Kirkland07,  Merries94,Simic08} and some of the references in \cite{Kirkland07}  for several interesting results of these graphs.

Let $G$ be a finite non-abelian group with center $Z(G)$. The commuting graph of  $G$, denoted by $\Gamma_G$, is a simple undirected graph whose vertex set is $G\setminus Z(G)$, and two distinct vertices $x$ and $y$ are adjacent if and only if $xy = yx$.   Various   aspects of commuting graphs of different finite groups can be found in  \cite{amr06,iJ07,mP13,par13}. 
A finite non-abelian group $G$ is called integral or L-integral or Q-integral according as $\Gamma_G$ is  integral or L-integral or Q-integral.  Then one may  ask the following questions.

\noindent \textbf{Question 1.} Which finite non-abelian groups are integral?

\noindent \textbf{Question 2.} Which finite non-abelian groups are L-integral?

\noindent \textbf{Question 3.}  Which finite non-abelian groups are Q-integral?

 A finite non-abelian group $G$ is called super integral if $\Gamma_G$ is integral, L-integral and Q-integral. Therefore, in the line of above questions, we can also ask the following question. 

\noindent \textbf{Question 4.}  Which finite non-abelian groups are super integral?

In \cite{Dutta16,DN16}, the authors have studied the spectrum of $\Gamma_G$ for several families of finite non-abelian groups and  determined several  finite non-abelian integral groups. In  this paper,  we  study the Laplacian spectrum and signless Laplacian spectrum of commuting graphs of those groups, viz., finite groups whose central quotient is isomorphic to $Sz(2)$ (the Suzuki group of order $20$) or ${\mathbb{Z}}_p \times {\mathbb{Z}}_p$ (for any pripe $p$) or $D_{2m}$ (the dihedral group of order $2m$), quasidihedral groups, generalized quaternion groups,  general  linear groups,   some projective special linear groups,   the groups constructed by Hanaki in \cite{Han96} etc. In a separate paper \cite{dn016} we have studied various energies of the commuting graphs of these groups.
Our computations in Section 2 reveal that all the above mentioned groups are super integral. Further, we shall show that all finite  AC-groups are super integral. The rest part of this paper is devoted to some applications of the results obtained in Section 2.

Let $x$ be an element of $G$. Then centralizer of $x$ in $G$ denoted by $C_G(x)$ is the set given by $\{y \in G : xy = yx\}$. Let $\cent(G) = \{C_G(x) : x \in G\}$. A group $G$ is called an $n$-centralizer group if $|\cent(G)| = n$. The study of $n$-centralizer groups was initiated by  Belcastro and  Sherman   \cite{bG94} in the year 1994. The readers may conf. \cite{Dutta10} for various results on $n$-centralizer groups. As an application of our results, in Section 3, we determine some positive integers $n$ such that $G$ is super integral if $|\cent(G)| = n$. 

The commutativity degree of $G$ denoted by $\Pr(G)$ is the probability that a randomly chosen pair of elements of $G$ commute. Clearly, $\Pr(G) = 1$ if and only if $G$ is abelian. For a non-abelian group, it was shown in \cite{Gustafson73} that $\Pr(G) \leq 5/8$. Since then many mathematicians have studied this notion. The readers may conf. \cite{Caste10,Nath08} for various results on $\Pr(G)$. A survey of recent works on $\Pr(G)$ can be found in \cite{Dnp13}. Using our results, in Section 4, we shall show that $G$ is super integral if $\Pr(G) \in \{\frac{5}{14}, \frac{2}{5}, \frac{11}{27}, \frac{1}{2}, \frac{5}{8}\}$.

Recall that  genus of a graph is the smallest non-negative integer $n$ such that the graph can be embedded on the surface obtained by attaching $n$ handles to a sphere. A graph is said to be planar or toroidal if the genus of the graph is zero or one respectively. It is worth mentioning that  Afkhami et al. \cite{AF14} and Das et al. \cite{das13} have classified all finite non-abelian groups whose commuting graphs are planar or toroidal recently. In  the last section, we shall show that  a finite non-abelian group $G$ is super integral if it is not isomorphic to $S_4$, the symmetric group of degree $4$, and the commuting graph of $G$ is planar. We also show that  a finite non-abelian group  is  super integral if its  commuting graph    is toroidal.

\section{Some Computations}  
 It is well-known that  $\L-spec(K_n) = \{0^1, n^{n - 1}\}$ and $\Q-spec(K_n) = \{(2n -2)^1, (n - 2)^{n - 1}\}$ where $K_n$ denotes the complete graph on $n$ vertices. Further, we have the following theorem. 

\begin{theorem}\label{prethm1}
If $\mathcal{G} = l_1K_{m_1}\sqcup l_2K_{m_2}\sqcup\cdots  \sqcup l_kK_{m_k}$, where $l_iK_{m_i}$ denotes the disjoint union of $l_i$ copies of  $K_{m_i}$ for $1 \leq i \leq k$, then 
\[
\L-spec({\mathcal{G}}) = \left\{0^{\sum_{i=1}^k l_i}, m_1^{l_1(m_1 - 1)}, m_2^{l_2(m_2 - 1)}, \dots, m_k^{l_k(m_k - 1)}\right\}
\] and
\begin{align*}
\Q-spec({\mathcal{G}}) = \{&(2m_1 - 2)^{l_1}, (m_1 - 2)^{l_1(m_1 - 1)}, (2m_2 - 2)^{l_2}, (m_2 - 2)^{l_2(m_2 - 1)}, \\ 
&\dots, (2m_k - 2)^{l_k}, (m_k - 2)^{l_k(m_k - 1)}\}.
\end{align*}
\end{theorem}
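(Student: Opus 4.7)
The plan is to reduce the statement to the known spectra of $K_n$ cited immediately before the theorem, together with the standard fact that the Laplacian and signless Laplacian of a disjoint union are block diagonal.

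First, I would record the structural observation: if ${\mathcal{G}} = {\mathcal{G}}_1 \sqcup {\mathcal{G}}_2$, then with respect to the partition of vertices into the two components, both the adjacency matrix $A({\mathcal{G}})$ and the degree matrix $D({\mathcal{G}})$ are block diagonal, so $L({\mathcal{G}}) = L({\mathcal{G}}_1) \oplus L({\mathcal{G}}_2)$ and $Q({\mathcal{G}}) = Q({\mathcal{G}}_1) \oplus Q({\mathcal{G}}_2)$. Iterating, the characteristic polynomials factor as products over components, and therefore $\L\text{-spec}$ and $\Q\text{-spec}$ of a disjoint union are just the multiset unions of the $\L\text{-spec}$ and $\Q\text{-spec}$ of the components (with multiplicities added). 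This lemma (or its one-line proof) is what powers the whole computation.

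Next, I would plug in the cited formulas $\L\text{-spec}(K_n) = \{0^1, n^{n-1}\}$ and $\Q\text{-spec}(K_n) = \{(2n-2)^1, (n-2)^{n-1}\}$. For a single block $l_i K_{m_i}$ (i.e.\ $l_i$ disjoint copies of $K_{m_i}$), taking $l_i$ copies of each eigenvalue gives
\[
\L\text{-spec}(l_i K_{m_i}) = \{0^{l_i}, m_i^{l_i(m_i - 1)}\}
\]
and
\[
\Q\text{-spec}(l_i K_{m_i}) = \{(2m_i - 2)^{l_i}, (m_i - 2)^{l_i(m_i - 1)}\}.
\]
Finally, I would take the multiset union over $i = 1, \dots, k$. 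For the Laplacian spectrum, the only eigenvalue that coincides across different $i$ is $0$, whose total multiplicity collects to $\sum_{i=1}^k l_i$; the remaining eigenvalues $m_i$ are listed separately with multiplicities $l_i(m_i - 1)$, giving the displayed formula for $\L\text{-spec}({\mathcal{G}})$. For the signless Laplacian spectrum, since the eigenvalues $2m_i - 2$ and $m_i - 2$ depend on $m_i$, one simply concatenates the contributions, yielding the stated expression for $\Q\text{-spec}({\mathcal{G}})$.

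There is no real obstacle here: the argument is entirely a bookkeeping exercise on top of two ingredients — block-diagonal decomposition under disjoint union and the spectra of $K_n$. The only mild care needed is in recording multiplicities correctly and noting that the $0$ eigenvalues from different components collapse into a single entry of multiplicity $\sum l_i$ in $\L\text{-spec}({\mathcal{G}})$, whereas in $\Q\text{-spec}({\mathcal{G}})$ no such global collapse is forced in general.
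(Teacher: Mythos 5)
Your proposal is correct and is exactly the intended justification: the paper states this theorem without proof, treating it as an immediate consequence of the quoted spectra of $K_n$ together with the standard fact that the (signless) Laplacian of a disjoint union is block diagonal, which is precisely the bookkeeping you carry out. Nothing further is needed.
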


 Now we compute the  Laplacian spectrum and signless Laplacian spectrum of the  commuting graphs of some  families of finite non-abelian groups. We begin with some families of groups whose central factors are some well-known  groups. 
\begin{theorem} \label{order-20}
Let $G$ be a finite group and $\frac{G}{Z(G)} \cong Sz(2)$, where $Sz(2)$ is the Suzuki group presented by $\langle a, b : a^5 = b^4 = 1, b^{-1}ab = a^2 \rangle$. Then
\begin{align*}
\L-spec(\Gamma_G) = & \{0^6, (4|Z(G)|)^{4|Z(G)| - 1}, (3|Z(G)|)^{15|Z(G)| - 5}\}
\text{\quad and}\\ 
\Q-spec(\Gamma_G) = &\{(8|Z(G)| - 2)^1, (4|Z(G)| - 2)^{4|Z(G)| - 1}, (6|Z(G)| - 2)^5, (3|Z(G)| - 2)^{15|Z(G)| - 5}\}.
\end{align*}
\end{theorem}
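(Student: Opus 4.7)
The plan is to identify $\Gamma_G$ as a disjoint union of complete graphs and then read both spectra off Theorem~\ref{prethm1}. Throughout, write $z := |Z(G)|$, so $|G| = 20z$.

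\emph{Step 1 (centralizer analysis).} The group $Sz(2)$ is a Frobenius group of order $20$: its Frobenius kernel is the cyclic normal subgroup $\langle a \rangle$ of order $5$ and its Frobenius complements are the five Sylow $2$-subgroups, each cyclic of order $4$. In particular $Z(Sz(2)) = 1$, every centralizer of a non-identity element of $Sz(2)$ equals one of these six cyclic subgroups, and any two of them meet trivially. Using the standard correspondence $C_G(x)/Z(G) = C_{G/Z(G)}(xZ(G))$ for a non-central $x \in G$, each $C_G(x)$ is abelian, of order $5z$ or $4z$, and two distinct such centralizers meet precisely in $Z(G)$.

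\emph{Step 2 (shape of $\Gamma_G$).} Let $\mathcal{C}$ denote the set of maximal centralizers of non-central elements of $G$. For each $C \in \mathcal{C}$, the $|C| - z$ non-central elements of $C$ are pairwise commuting (as $C$ is abelian) and hence span a clique in $\Gamma_G$. Conversely, if $x, y$ are non-central with $xy = yx$, then $y \in C_G(x)$, and abelianness of $C_G(x)$ forces $C_G(x) \subseteq C_G(y)$; symmetry yields equality. Hence $\Gamma_G$ is the disjoint union of these cliques, one per $C \in \mathcal{C}$. There is exactly one centralizer of order $5z$ (contributing $K_{4z}$) and five of order $4z$ (each contributing $K_{3z}$), so
\[
\Gamma_G \;\cong\; K_{4z} \;\sqcup\; 5\,K_{3z}.
\]

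\emph{Step 3 (apply Theorem~\ref{prethm1}).} Substituting $k = 2$, $l_1 = 1$, $m_1 = 4z$, $l_2 = 5$, $m_2 = 3z$ into Theorem~\ref{prethm1} yields
\[
\L-spec(\Gamma_G) = \{0^{6},\;(4z)^{4z-1},\;(3z)^{15z-5}\}
\]
and
\[
\Q-spec(\Gamma_G) = \{(8z-2)^{1},\;(4z-2)^{4z-1},\;(6z-2)^{5},\;(3z-2)^{15z-5}\},
\]
which are precisely the asserted formulas.

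The principal obstacle is Step 1: the argument hinges on showing that every centralizer of a non-identity element of $Sz(2)$ is abelian and that two distinct such centralizers meet trivially, so that the corresponding preimages in $G$ meet in $Z(G)$. Both facts are immediate from the Frobenius structure of $Sz(2)$ (or verifiable directly from the presentation $\langle a,b : a^5 = b^4 = 1, b^{-1}ab = a^2 \rangle$, using e.g.\ $b^{-2}ab^{2} = a^{4}$ to rule out cross-centralization), but they are what makes the clique decomposition clean. Once the decomposition $\Gamma_G \cong K_{4z} \sqcup 5K_{3z}$ is in hand, Step 3 reduces to a direct substitution.
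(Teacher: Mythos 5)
Your proof is correct and follows essentially the same route as the paper: both reduce the theorem to the decomposition $\Gamma_G \cong K_{4|Z(G)|} \sqcup 5K_{3|Z(G)|}$ followed by an application of Theorem~\ref{prethm1}; the only difference is that the paper simply cites \cite[Theorem 2]{Dutta16} for this decomposition, whereas you rederive it from the Frobenius structure of $Sz(2)$. One caution about your Step 1: the ``standard correspondence'' $C_G(x)/Z(G) = C_{G/Z(G)}(xZ(G))$ is in general only the inclusion $C_G(x)/Z(G) \subseteq C_{G/Z(G)}(xZ(G))$; it does hold here, but the justification you should make explicit is that each centralizer in $Sz(2)$ is cyclic (of order $4$ or $5$), so its full preimage in $G$ is a central extension with cyclic quotient, hence abelian, hence contained in $C_G(x)$, which forces equality.
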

\begin{proof}
It was shown in \cite[Theorem 2]{Dutta16} that
$\Gamma_G = K_{4|Z(G)|}\sqcup 5K_{3|Z(G)|}$.
 Therefore, by   Theorem \ref{prethm1}, the result follows.
\end{proof}

\begin{theorem}\label{main2}
Let $G$ be a finite group such that $\frac{G}{Z(G)} \cong {\mathbb{Z}}_p \times {\mathbb{Z}}_p$, where $p$ is a prime integer. Then 
\begin{align*}
\L-spec(\Gamma_G) = &\{0^{p +1}, ((p - 1)|Z(G)|)^{(p^2 - 1)|Z(G)| - p - 1}\}
\text{ and}\\ 
\Q-spec(\Gamma_G) = &\{(2(p - 1)|Z(G)| - 2)^{p + 1}, ((p - 1)|Z(G)| - 2)^{(p^2 - 1)|Z(G)| - p - 1}\}.
\end{align*}
\end{theorem}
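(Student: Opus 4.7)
The plan is to mimic the proof of Theorem \ref{order-20}: first identify the isomorphism type of $\Gamma_G$ as a disjoint union of complete graphs, then invoke Theorem \ref{prethm1}. The expectation is that the structural identification of $\Gamma_G$ as $(p+1)K_{(p-1)|Z(G)|}$ is either already available in \cite{Dutta16} (analogous to the $Sz(2)$ case cited there) or follows from a short centralizer argument sketched below.

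The centralizer argument would go as follows. For any non-central $x \in G$, the subgroup $C_G(x)$ strictly contains $Z(G)$, so $C_G(x)/Z(G)$ is a nontrivial subgroup of $G/Z(G) \cong \mathbb{Z}_p \times \mathbb{Z}_p$. If $C_G(x)/Z(G)$ equalled all of $G/Z(G)$, then $x$ would be central, a contradiction. Hence $|C_G(x)/Z(G)| = p$ and $|C_G(x)| = p|Z(G)|$. Since $\mathbb{Z}_p \times \mathbb{Z}_p$ has exactly $p+1$ subgroups of order $p$, the collection of centralizers of non-central elements consists of exactly $p+1$ distinct subgroups $C_1, \dots, C_{p+1}$, each of order $p|Z(G)|$. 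Any two distinct such centralizers meet in $Z(G)$, because their images in $G/Z(G)$ are distinct order-$p$ subgroups meeting trivially. Thus the sets $C_i \setminus Z(G)$ partition $G \setminus Z(G)$, each has size $(p-1)|Z(G)|$, and within each set all elements pairwise commute while elements from different sets do not. This shows $\Gamma_G = (p+1)K_{(p-1)|Z(G)|}$.

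Now Theorem \ref{prethm1} applies directly with $k = 1$, $l_1 = p+1$, $m_1 = (p-1)|Z(G)|$. The Laplacian eigenvalue $0$ appears with multiplicity $\sum l_i = p+1$, and the eigenvalue $(p-1)|Z(G)|$ appears with multiplicity $(p+1)\big((p-1)|Z(G)| - 1\big) = (p^2-1)|Z(G)| - (p+1)$, matching the claim. Similarly the signless Laplacian yields the eigenvalue $2(p-1)|Z(G)| - 2$ with multiplicity $p+1$ and the eigenvalue $(p-1)|Z(G)| - 2$ with multiplicity $(p^2-1)|Z(G)| - (p+1)$, as stated.

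There is no genuine obstacle here; the only care needed is the arithmetic verification that the multiplicities $(p+1)((p-1)|Z(G)| - 1)$ simplify to the form $(p^2-1)|Z(G)| - p - 1$ printed in the theorem, and the observation that Theorem \ref{prethm1} is stated for arbitrary $k$ so the case $k = 1$ with a single block size is covered.
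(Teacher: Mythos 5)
Your proposal is correct and follows essentially the same route as the paper: identify $\Gamma_G$ as $(p+1)K_{(p-1)|Z(G)|}$ and then apply Theorem~\ref{prethm1}. The only difference is that the paper simply cites this decomposition from \cite[Theorem 2.1]{DN16}, whereas you supply the (correct) centralizer argument proving it; your verification of the multiplicities $(p+1)\bigl((p-1)|Z(G)|-1\bigr)=(p^2-1)|Z(G)|-p-1$ is also accurate.
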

\begin{proof}
It was shown in \cite[Theorem 2.1]{DN16} that $\Gamma_G = (p + 1) K_{(p - 1)|Z(G)|}$. Hence the result follows from  Theorem \ref{prethm1}. 
\end{proof}
\noindent As a corollary we have the following result.
\begin{corollary}
Let $G$ be a non-abelian group of order $p^3$, for any prime $p$, then  
\begin{align*}
\L-spec(\Gamma_G) = &\{0^{p +1}, (p^2 - p)^{(p^3 - 2p  - 1}\}
\text{ and }\\
\Q-spec(\Gamma_G) = & \{(2p^2 - 2p - 2)^{p + 1}, (p^2 - p - 2)^{p^3 - 2p  - 1}\}.
\end{align*}
\end{corollary}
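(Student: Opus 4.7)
The plan is to reduce the statement to an immediate application of Theorem \ref{main2}. The key structural input I would use is the standard fact from finite $p$-group theory: if $G$ is a non-abelian group of order $p^3$ for a prime $p$, then its center $Z(G)$ is nontrivial (as $G$ is a $p$-group) and proper (as $G$ is non-abelian), and furthermore $|Z(G)| \neq p^2$ because otherwise $G/Z(G)$ would be cyclic of order $p$, forcing $G$ to be abelian by the standard $G/Z(G)$-cyclic argument. Hence $|Z(G)| = p$.

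Consequently, $G/Z(G)$ is a group of order $p^2$, and again by the same cyclic-quotient argument it cannot be cyclic, so $G/Z(G) \cong \mathbb{Z}_p \times \mathbb{Z}_p$. This places $G$ exactly in the hypothesis of Theorem \ref{main2}.

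Now I would substitute $|Z(G)| = p$ into the two spectral formulas of Theorem \ref{main2} and simplify the exponents. For the Laplacian spectrum the nonzero eigenvalue becomes $(p-1)\cdot p = p^2 - p$, and its multiplicity is $(p^2-1)p - p - 1 = p^3 - 2p - 1$, while $0$ retains multiplicity $p+1$. For the signless Laplacian spectrum the eigenvalues become $2(p-1)p - 2 = 2p^2 - 2p - 2$ (with multiplicity $p+1$) and $(p-1)p - 2 = p^2 - p - 2$ (with multiplicity $p^3 - 2p - 1$), matching the claimed formulas.

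There is essentially no obstacle here; the only thing to be careful about is citing or briefly justifying the $p$-group facts that pin down $|Z(G)|$ and the isomorphism type of $G/Z(G)$. Everything else is direct substitution into Theorem \ref{main2}.
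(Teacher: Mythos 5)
Your proof is correct and follows exactly the same route as the paper: establish that $|Z(G)| = p$ and $G/Z(G) \cong \mathbb{Z}_p \times \mathbb{Z}_p$, then substitute into Theorem \ref{main2}. You simply spell out the standard $p$-group justifications that the paper leaves implicit, and your arithmetic for the eigenvalues and multiplicities checks out.
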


\begin{proof}
Note that $|Z(G)| = p$ and  $\frac{G}{Z(G)} \cong {\mathbb{Z}}_p \times {\mathbb{Z}}_p$. Hence the  result follows from Theorem \ref{main2}.
\end{proof}

\begin{theorem}\label{main4}
Let $G$ be a finite group such that $\frac{G}{Z(G)} \cong D_{2m}$, for $m \geq 2$. Then
\begin{align*}
\L-spec(\Gamma_G) = &\{0^{m + 1}, ((m - 1)|Z(G)|)^{(m - 1)|Z(G)| - 1}, (|Z(G)|)^{m(|Z(G)| - 1)}\}
\text{ and }\\
\Q-spec(\Gamma_G) = & \{(2(m - 1)|Z(G)| - 2)^1, ((m - 1)|Z(G)| - 2)^{(m - 1)|Z(G)| - 1},\\ & \hspace{4cm} (2|Z(G)| - 2)^m, (|Z(G)| - 2)^{m(|Z(G)| - 1)}\}.
\end{align*}
\end{theorem}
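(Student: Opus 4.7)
The plan is to mirror the strategy used in the proofs of Theorem \ref{order-20} and Theorem \ref{main2}: reduce the spectral computation to the structural fact that $\Gamma_G$ is a disjoint union of complete graphs, and then invoke Theorem \ref{prethm1}. From the shape of the claimed spectra I can already read off what that structural decomposition must be: the multiplicity $m+1$ of the eigenvalue $0$ in $\L\text{-}\spec(\Gamma_G)$ forces $m+1$ connected components, while the multiplicities $(m-1)|Z(G)|-1$ and $m(|Z(G)|-1)$ attached to the nonzero Laplacian eigenvalues $(m-1)|Z(G)|$ and $|Z(G)|$ force exactly one clique of size $(m-1)|Z(G)|$ and exactly $m$ cliques of size $|Z(G)|$. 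So the target identity is
\[
\Gamma_G \;=\; K_{(m-1)|Z(G)|} \;\sqcup\; m\,K_{|Z(G)|}.
\]

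The first step will be to establish (or, more likely, cite from \cite{Dutta16} or \cite{DN16}, analogously to the uses in Theorems \ref{order-20} and \ref{main2}) the above decomposition of $\Gamma_G$ under the hypothesis $G/Z(G) \cong D_{2m}$. If one had to redo it, the argument goes via the centralizer structure: because $G/Z(G)$ is a nontrivial dihedral group, $G$ is an AC-group, i.e.\ the centralizer of every non-central element is abelian. The subgroups $C_G(x)/Z(G)$, as $x$ ranges over non-central elements, are exactly the proper nontrivial subgroups of $D_{2m}$ of prime-power-like structure, and their preimages in $G$ partition $G\setminus Z(G)$. Lifting the $m$ subgroups of order $2$ and the unique cyclic subgroup of order $m$ of $D_{2m}$ yields one centralizer of size $m|Z(G)|$ (contributing a clique of size $(m-1)|Z(G)|$ after removing $Z(G)$) and $m$ centralizers of size $2|Z(G)|$ (each contributing a clique of size $|Z(G)|$), which are pairwise disjoint off the center because distinct maximal abelian subgroups of an AC-group meet in $Z(G)$.

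Once the decomposition is in hand, the second step is a direct application of Theorem \ref{prethm1} with $k=2$, $l_1 = 1$, $m_1 = (m-1)|Z(G)|$, $l_2 = m$, $m_2 = |Z(G)|$. This instantly produces the Laplacian spectrum
\[
\{0^{m+1},\, ((m-1)|Z(G)|)^{(m-1)|Z(G)|-1},\, (|Z(G)|)^{m(|Z(G)|-1)}\}
\]
and the signless Laplacian spectrum with the four blocks coming from the two clique sizes (each clique $K_n$ contributes $(2n-2)^1$ and $(n-2)^{n-1}$).

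I expect the only real obstacle to be verifying, or citing cleanly, the structural decomposition of $\Gamma_G$; the spectral half is a mechanical substitution into Theorem \ref{prethm1}. In particular, the case $m=2$ is slightly degenerate since $D_4$ is abelian, so one should be mindful that the hypothesis $\frac{G}{Z(G)}\cong D_{2m}$ with $m\geq 2$ is meant in the sense used in \cite{Dutta16,DN16} (where $D_{2m}$ denotes the dihedral group of order $2m$ with $m\geq 3$, or else the cited structural lemma is stated to cover $m=2$ directly by an ad hoc argument reducing to the ${\mathbb{Z}}_2\times{\mathbb{Z}}_2$ case of Theorem \ref{main2}).
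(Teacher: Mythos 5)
Your proposal matches the paper's proof: the paper simply cites \cite[Theorem 2.5]{DN16} for the decomposition $\Gamma_G = K_{(m-1)|Z(G)|} \sqcup m K_{|Z(G)|}$ and then applies Theorem \ref{prethm1}, exactly as you describe. Your correctly identified decomposition, the citation strategy, and the instantiation of Theorem \ref{prethm1} (and even your side remark that the $m=2$ case is consistent with Theorem \ref{main2}) are all in line with the paper's argument.
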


\begin{proof}
It was shown in \cite[Theorem 2.5]{DN16} that $\Gamma_G = K_{(m - 1)|Z(G)|} \sqcup m K_{|Z(G)|}$. Hence the result follows from  Theorem \ref{prethm1}. 
\end{proof}

\noindent Using Theorem \ref{main4}, we now compute the Laplacian and signless Laplacian spectrum of the commuting graphs of the groups $M_{2mn}, D_{2m}$ and $Q_{4n}$ respectively.   

\begin{corollary}\label{main05}
Let $M_{2mn} = \langle a, b : a^m = b^{2n} = 1, bab^{-1} = a^{-1} \rangle$ be a metacyclic group, where $m > 2$. Then
\[
\L-spec(\Gamma_{M_{2mn}}) = \begin{cases}
\{0^{m + 1}, (mn - n)^{mn -n -1}, n^{mn - m}\} & \text{if $m$ is odd}\\
\{0^{\frac{m}{2} + 1}, (mn - 2n)^{mn -2n -1}, (2n)^{mn - \frac{m}{2}}\} & \text{if $m$ is even}\\
\end{cases}
\]
and
 \[\Q-spec(\Gamma_{M_{2mn}}) = \begin{cases}
\!\!\{(2mn -2n -2)^1, (mn - n - 2)^{mn - n - 1}, (2n - 2)^m, (n - 2)^{mn - m}\}  \text{ if $m$ is odd}\\
\!\!\{(2mn -4n -2)^1, (mn - 2n - 2)^{mn - 2n - 1}, (4n - 2)^{\frac{m}{2}},\\
\hfill (2n - 2)^{mn - \frac{m}{2}}\} \text{ if $m$ is even}. 
\end{cases}
\]
\end{corollary}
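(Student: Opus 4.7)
My plan is to reduce the corollary to Theorem \ref{main4} by identifying $Z(M_{2mn})$ and the central quotient $M_{2mn}/Z(M_{2mn})$ in the two parity cases for $m$, then substituting into the formulas of Theorem \ref{main4}.

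First I would locate the center. A generic element has the form $a^ib^j$ with $0\le i<m$, $0\le j<2n$. Since $bab^{-1}=a^{-1}$, one gets $b^jab^{-j}=a^{(-1)^j}$, so $a^ib^j$ commutes with $a$ iff $j$ is even (using $m>2$), and it commutes with $b$ iff $a^{2i}=1$, i.e.\ $m\mid 2i$. Thus $Z(M_{2mn})=\{a^ib^j : m\mid 2i,\ j\text{ even}\}$. If $m$ is odd, then $m\mid 2i$ forces $i=0$, so $Z(M_{2mn})=\langle b^2\rangle$ has order $n$. If $m$ is even, then $i\in\{0,m/2\}$, giving $|Z(M_{2mn})|=2n$.

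Next I would identify the central quotient. In both cases, writing $\bar a,\bar b$ for the images in $M_{2mn}/Z(M_{2mn})$, the relations $\bar a^k=\bar 1$ (where $k=m$ if $m$ is odd and $k=m/2$ if $m$ is even), $\bar b^2=\bar 1$, and $\bar b\bar a\bar b^{-1}=\bar a^{-1}$ hold. A counting argument using the order of the quotient ($|M_{2mn}/Z(M_{2mn})|=2m$ or $m$, respectively) shows the quotient has exactly the order of $D_{2k}$, hence $M_{2mn}/Z(M_{2mn})\cong D_{2m}$ when $m$ is odd and $M_{2mn}/Z(M_{2mn})\cong D_m$ when $m$ is even. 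This is the point where one must be a little careful because the parameter ``$m$'' appearing inside Theorem \ref{main4} is not the same as the ``$m$'' in the notation $M_{2mn}$; the former becomes $m$ in the odd case and $m/2$ in the even case.

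Finally I would substitute into Theorem \ref{main4}. In the odd-$m$ case, setting the theorem's parameter equal to $m$ and $|Z(G)|=n$ produces exactly $\{0^{m+1},\,(mn-n)^{mn-n-1},\,n^{mn-m}\}$ for the Laplacian spectrum, and the corresponding four-block expression for $\Q$-spec. In the even-$m$ case, setting the theorem's parameter equal to $m/2$ and $|Z(G)|=2n$ yields $\{0^{m/2+1},\,(mn-2n)^{mn-2n-1},\,(2n)^{mn-m/2}\}$, and the analogous simplification for $\Q$-spec. Each substitution is a one-line algebraic simplification, so no obstacle remains beyond the bookkeeping already handled above; the only thing to watch is the parameter relabeling when invoking Theorem \ref{main4}.
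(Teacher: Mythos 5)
Your proposal is correct and follows essentially the same route as the paper: identify $Z(M_{2mn})$ (of order $n$ or $2n$ according as $m$ is odd or even), observe that the central quotient is $D_{2m}$ or $D_m$, and substitute into Theorem~\ref{main4} with the appropriate parameter relabeling. The paper simply asserts these two facts without the verification you supply, so your write-up is a more detailed version of the same argument.
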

\begin{proof}
Observe that $Z(M_{2mn}) = \langle b^2 \rangle$ or $\langle b^2 \rangle \cup a^{\frac{m}{2}}\langle b^2 \rangle$ according as $m$ is odd or even.  Also, it is easy to see that $\frac{M_{2mn}}{Z(M_{2mn})} \cong D_{2m}$ or $D_m$ according as $m$ is odd or even. Hence, the result follows from Theorem \ref{main4}.
\end{proof}
\noindent As a corollary to the above result we have the following results.
\begin{corollary}\label{main005}
Let $D_{2m} = \langle a, b : a^m = b^{2} = 1, bab^{-1} = a^{-1} \rangle$ be  the dihedral group of order $2m$, where $m > 2$. Then 
\[
\L-spec(\Gamma_{D_{2m}}) = \begin{cases}
\{0^{m + 1}, (m - 1)^{m - 2}\} & \text{ if $m$ is odd}\\
\{0^{\frac{m}{2} + 1}, (m - 2)^{m - 3}, 2^{\frac{m}{2}}\} &\text{ if $m$ is even}\\
\end{cases}
\]
and
\[\Q-spec(\Gamma_{D_{2m}}) = \begin{cases}
\{(2m - 4)^1, (m - 3)^{m - 2}, (2n - 2)^m, 0^m\} & \text{ if $m$ is odd}\\
\{(2m  - 6)^1, (m - 4)^{m - 3}, 2^{\frac{m}{2}}, 0^\frac{m}{2}\} & \text{ if $m$ is even}. 
\end{cases}
\]
\end{corollary}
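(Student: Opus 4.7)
The plan is to recognize that the dihedral group $D_{2m}$ is precisely the metacyclic group $M_{2mn}$ from Corollary \ref{main05} in the special case $n = 1$. The presentations coincide under this substitution: the relation $b^{2n} = 1$ becomes $b^2 = 1$, and the conjugation relation $bab^{-1} = a^{-1}$ is already present. Consequently, the entire computation has been done, and only a substitution is required.

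First I would verify that the hypothesis $m > 2$ in the statement of the corollary matches the hypothesis $m > 2$ of Corollary \ref{main05}, so no boundary analysis is needed. Then I would substitute $n = 1$ into the four stated spectra of Corollary \ref{main05}. For the Laplacian spectrum with $m$ odd, the term $(mn - n)^{mn - n - 1}$ reduces to $(m-1)^{m-2}$, while $n^{mn - m}$ reduces to $1^0$, which contributes nothing and is discarded, yielding $\{0^{m+1}, (m-1)^{m-2}\}$. For $m$ even, $(mn - 2n)^{mn - 2n - 1}$ becomes $(m-2)^{m-3}$ and $(2n)^{mn - m/2}$ becomes $2^{m/2}$, all with positive multiplicities. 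The signless Laplacian case is analogous: for odd $m$ the term $(n-2)^{mn-m}$ collapses to $(-1)^{0}$ and is dropped, leaving the stated list (with $(2n-2)^m$ interpreted as $0^m$); for even $m$ all four listed eigenvalues survive.

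The only real task is the bookkeeping — noting which eigenvalues degenerate to zero multiplicity under the specialization $n = 1$, and dropping them so that the spectra are displayed in canonical form. There is no conceptual obstacle, since Corollary \ref{main005} is a strict specialization of Corollary \ref{main05}, which itself was obtained by applying Theorem \ref{main4} together with the standard identification $M_{2mn}/Z(M_{2mn}) \cong D_{2m}$ (odd $m$) or $D_m$ (even $m$).
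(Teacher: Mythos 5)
Your proof is correct and is exactly the route the paper intends: Corollary~\ref{main005} is stated immediately after Corollary~\ref{main05} with no separate argument precisely because it is the specialization $n = 1$, with the zero-multiplicity terms ($1^{0}$ and $(-1)^{0}$) discarded. You also correctly flag that the stray $(2n-2)^m$ appearing alongside $0^m$ in the odd case of the printed statement is a typographical remnant of that substitution and should collapse to the single term $0^m$.
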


\begin{corollary}\label{q4m}
Let $Q_{4n} = \langle x, y : y^{2n} = 1, x^2 = y^n,yxy^{-1} = y^{-1}\rangle$, where $n \geq 2$, be the   generalized quaternion group of order $4n$. Then
\[
\L-spec(\Gamma_{Q_{4n}}) =  \{0^{n + 1}, (2n - 2)^{2n - 3}, 2^n\}\text{ and }
\Q-spec(\Gamma_{Q_{4n}}) =  \{(4n - 6)^1, (2n - 4)^{2n - 3}, 2^n, 0^n\}.
\]

\end{corollary}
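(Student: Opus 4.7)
The plan is to apply Theorem \ref{main4} in exactly the same way as Corollary \ref{main005} does for $D_{2m}$. The whole argument reduces to identifying the center of $Q_{4n}$ and recognizing the central quotient as a dihedral group of the right order.

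First I would compute the center $Z(Q_{4n})$ directly from the presentation $\langle x, y : y^{2n} = 1, x^2 = y^n, yxy^{-1} = y^{-1}\rangle$. The relation $yxy^{-1} = y^{-1}$ (equivalently $xyx^{-1} = y^{-1}$) shows that a power $y^k$ is central precisely when $y^k = y^{-k}$, i.e.\ $y^{2k} = 1$, forcing $k \in \{0, n\}$. Checking the coset of $x$ similarly shows no further central elements arise, so $Z(Q_{4n}) = \{1, y^n\}$ and hence $|Z(Q_{4n})| = 2$.

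Next I would show that $Q_{4n}/Z(Q_{4n}) \cong D_{2n}$, the dihedral group of order $2n$ (in the convention of the paper, where $D_{2m}$ denotes the group of order $2m$). Write $\bar{y}, \bar{x}$ for the images of $y, x$ in the quotient. Then $\bar{y}^n = \overline{y^n} = \bar{1}$ and $\bar{x}^2 = \overline{y^n} = \bar{1}$, while $\bar{y}\bar{x}\bar{y}^{-1} = \bar{x}^{-1}$ follows from the braiding relation. This matches the dihedral presentation $\langle a, b : a^n = b^2 = 1, bab^{-1} = a^{-1}\rangle$ of $D_{2n}$, and a cardinality check ($|Q_{4n}/Z(Q_{4n})| = 2n$) confirms the isomorphism.

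With these two facts in hand, the hypotheses of Theorem \ref{main4} are met with parameter $m = n$ and $|Z(G)| = 2$. Substituting these values into the formulas of Theorem \ref{main4} and simplifying, the multiplicity $m+1$ becomes $n+1$, the block $((m-1)|Z(G)|)^{(m-1)|Z(G)|-1}$ becomes $(2n-2)^{2n-3}$, and the block $(|Z(G)|)^{m(|Z(G)|-1)}$ becomes $2^n$, giving the claimed $\L-spec(\Gamma_{Q_{4n}})$. The analogous substitution in the signless Laplacian formula yields the four blocks $(4n-6)^1$, $(2n-4)^{2n-3}$, $2^n$, $0^n$. There is no real obstacle here; the only mild subtlety is the boundary case $n = 2$, where $Q_8/Z(Q_8)$ is the Klein four group, but since $D_4 \cong \mathbb{Z}_2 \times \mathbb{Z}_2$ (the dihedral group of order $4$) the identification still holds and the formulas remain valid.
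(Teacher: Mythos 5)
Your proposal is correct and follows exactly the paper's route: identify $Z(Q_{4n})=\{1,y^n\}$, observe $Q_{4n}/Z(Q_{4n})\cong D_{2n}$, and substitute $m=n$, $|Z(G)|=2$ into Theorem~\ref{main4}; the paper's own proof is just a one-line version of this. The only blemish is a transcription slip in your verification of the dihedral relation --- it should read $\bar{x}\bar{y}\bar{x}^{-1}=\bar{y}^{-1}$ (with $\bar{y}$ in the role of $a$ and $\bar{x}$ in the role of $b$), not $\bar{y}\bar{x}\bar{y}^{-1}=\bar{x}^{-1}$ --- which does not affect the argument.
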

\begin{proof}
The result follows from Theorem \ref{main4} noting that  $Z(Q_{4n}) = \{1, a^n\}$ and  $\frac{Q_{4n}}{Z(Q_{4n})} \cong D_{2n}$.   
\end{proof}

Now we compute the Laplacian spectrum and signless Laplacian spectrum of the  commuting graphs of some  well-known families of finite non-abelian groups. 

\begin{proposition}\label{order-pq}
Let $G$ be a non-abelian group of order $pq$, where $p$ and $q$ are primes with $p\mid (q - 1)$. Then
\begin{align*}
\L-spec(\Gamma_G) = & \{0^{q + 1}, (q - 1)^{q - 2}, (p - 1)^{pq - 2q}\} \quad \text{ and } \\
\Q-spec(\Gamma_G) = & \{(2q - 4)^1, (q - 3)^{q - 2}, (2p - 4)^q, (p - 3)^{pq - 2q}\}.
\end{align*}
\end{proposition}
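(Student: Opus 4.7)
The plan is to identify the structure of the commuting graph $\Gamma_G$ as a disjoint union of complete graphs and then invoke Theorem \ref{prethm1}, exactly in the style of the preceding corollaries. So the task reduces to showing
\[
\Gamma_G \;\cong\; K_{q-1} \sqcup q\,K_{p-1}.
\]

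To establish this, I would first observe that since $G$ is non-abelian of order $pq$ with two distinct prime divisors, $Z(G)$ is trivial: $|Z(G)|$ divides $pq$, and if $|Z(G)|\in\{p,q,pq\}$ then $G/Z(G)$ is either trivial or cyclic, forcing $G$ abelian. Hence the vertex set of $\Gamma_G$ is $G\setminus\{1\}$, of size $pq-1$. By Sylow's theorems (using $p\mid q-1$), $G$ has a unique normal Sylow $q$-subgroup $Q$ of order $q$ and exactly $q$ Sylow $p$-subgroups $P_1,\dots,P_q$ of order $p$. Every non-identity element of $G$ lies in exactly one of these subgroups because any two distinct Sylow subgroups intersect trivially by Lagrange.

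Next I would analyse commutativity among these elements. Since $Q$ and each $P_i$ have prime order they are abelian, so the $q-1$ non-identity elements of $Q$ form a clique, and for each $i$ the $p-1$ non-identity elements of $P_i$ form a clique. Conversely, two non-identity elements $x,y$ from different Sylow subgroups cannot commute: the centralizer $C_G(x)$ contains the (prime-order) Sylow subgroup of $x$, and if it also contained a non-identity element from another Sylow subgroup then $|C_G(x)|$ would exceed the order of the unique Sylow subgroup of $x$ and hence would equal $pq$, making $x\in Z(G)=\{1\}$, a contradiction. Therefore $\Gamma_G$ is precisely $K_{q-1}\sqcup qK_{p-1}$.

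Having identified the graph, applying Theorem \ref{prethm1} with $(l_1,m_1)=(1,q-1)$ and $(l_2,m_2)=(q,p-1)$ yields the claimed Laplacian and signless Laplacian spectra after a routine simplification of the exponent $q(p-2)=pq-2q$. The only step that requires real work is the disjoint-union decomposition above; I expect this to be the main (though modest) obstacle, since the rest is purely mechanical substitution into Theorem \ref{prethm1}. If the decomposition is already recorded in \cite{Dutta16} or \cite{DN16}, the proof can be shortened to a single sentence citing that fact and then invoking Theorem \ref{prethm1}.
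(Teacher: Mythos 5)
Your proposal is correct and follows essentially the same route as the paper: the paper's proof simply cites \cite[Lemma 3]{Dutta16} for the decomposition $\Gamma_G = K_{q-1} \sqcup qK_{p-1}$ and then invokes Theorem \ref{prethm1}. Your Sylow-theoretic derivation of that decomposition is a valid (and self-contained) substitute for the citation, and the substitution into Theorem \ref{prethm1} matches the stated spectra.
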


\begin{proof}
It was shown in \cite[Lemma 3]{Dutta16} that $\Gamma_G = K_{q-1} \sqcup qK_{p - 1}$. Hence, the result follows from  Theorem  \ref{prethm1}.  
\end{proof}

\begin{proposition}\label{semid}
Let $QD_{2^n}$ denote the quasidihedral group $\langle a, b : a^{2^{n-1}} =  b^2 = 1, bab^{-1} = a^{2^{n - 2} - 1}\rangle$, where $n \geq 4$. Then 
\begin{align*}
\L-spec(\Gamma_{QD_{2^n}}) = & \{0^{2^{n - 2} + 1}, (2^{n - 1} - 2)^{2^{n - 1}-3}, 2^{2^{n - 2}}\} \text{\quad and } \\
\Q-spec(\Gamma_{QD_{2^n}}) = &\{(2^n - 6)^1, (2^{n - 1} - 4)^{{2^{n - 1} - 3}}, 2^{2^{n - 2}}, 0^{2^{n - 2}}\}.
\end{align*}
\end{proposition}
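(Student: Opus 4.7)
The proposal is to reduce this to Theorem \ref{main4} by identifying $QD_{2^n}$ as a group whose central quotient is dihedral, exactly as was done in Corollary \ref{q4m} for the generalized quaternion group.

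First I would compute $Z(QD_{2^n})$ directly from the presentation. Using the relation $bab^{-1} = a^{2^{n-2}-1}$, one finds that $a^k$ is central iff $a^k = a^{k(2^{n-2}-1)}$, i.e. $2^{n-1} \mid k(2^{n-2}-2) = 2k(2^{n-3}-1)$, which since $2^{n-3}-1$ is odd forces $2^{n-2} \mid k$. Hence the central powers of $a$ are $\{1, a^{2^{n-2}}\}$, and a short check shows no element of the form $a^k b$ commutes with $a$. Therefore $Z(QD_{2^n}) = \langle a^{2^{n-2}}\rangle$ has order $2$.

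Next I would identify the central quotient. Since $\overline{a}$ has order $2^{n-2}$ in $QD_{2^n}/Z(QD_{2^n})$, $\overline{b}$ has order $2$, and $\overline{b}\,\overline{a}\,\overline{b}^{-1} = \overline{a}^{2^{n-2}-1} = \overline{a}^{-1}$ (using $\overline{a}^{2^{n-2}}=1$), the central quotient is generated by two elements satisfying the defining relations of $D_{2\cdot 2^{n-2}}$, and a cardinality count ($|QD_{2^n}/Z(QD_{2^n})| = 2^{n-1}$) gives $QD_{2^n}/Z(QD_{2^n}) \cong D_{2^{n-1}}$.

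Having set this up, the proof concludes by a direct application of Theorem \ref{main4} with $m = 2^{n-2}$ and $|Z(G)| = 2$: substitute to obtain $\L\text{-spec}$ as $\{0^{2^{n-2}+1}, (2^{n-1}-2)^{2^{n-1}-3}, 2^{2^{n-2}}\}$ and $\Q\text{-spec}$ as $\{(2^n-6)^1, (2^{n-1}-4)^{2^{n-1}-3}, 2^{2^{n-2}}, 0^{2^{n-2}}\}$, which matches the claimed formulas. No step should present any real difficulty; the only place to be careful is verifying the central quotient is dihedral rather than some other group of order $2^{n-1}$ (for instance, the quasidihedral $QD_{2^{n-1}}$ itself when $n \geq 5$), and this is handled by checking that $\overline{b}$ has order exactly $2$ in the quotient, which follows from $b^2 = 1$ in the given presentation.
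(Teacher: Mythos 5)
Your proof is correct, but it takes a different route from the paper's. The paper's proof of this proposition is a one-line citation: it invokes \cite[Proposition 1]{Dutta16} for the decomposition $\Gamma_{QD_{2^n}} = K_{2^{n-1}-2} \sqcup 2^{n-2}K_2$ and then applies Theorem~\ref{prethm1}. You instead compute $Z(QD_{2^n})$ and the central quotient from the presentation and feed the result into Theorem~\ref{main4} with $m = 2^{n-2}$ and $|Z(G)| = 2$ --- exactly the strategy the paper reserves for $M_{2mn}$, $D_{2m}$ and $Q_{4n}$ (Corollaries~\ref{main05}--\ref{q4m}) but, for whatever reason, not for $QD_{2^n}$. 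The two arguments meet in the middle: Theorem~\ref{main4} is itself proved via the clique decomposition $K_{(m-1)|Z(G)|} \sqcup mK_{|Z(G)|}$, which for your parameters is precisely $K_{2^{n-1}-2} \sqcup 2^{n-2}K_2$. What your version buys is self-containedness --- the center computation ($2^{n-1} \mid 2k(2^{n-3}-1)$ forcing $2^{n-2} \mid k$ since $2^{n-3}-1$ is odd for $n \geq 4$) and the identification of the quotient with $D_{2^{n-1}}$ by matching relations and order are both done from scratch rather than outsourced to \cite{Dutta16}; the paper's version buys brevity. Your substitution into Theorem~\ref{main4} checks out: $(m-1)|Z(G)| = 2^{n-1}-2$, $(m-1)|Z(G)|-1 = 2^{n-1}-3$, $2(m-1)|Z(G)|-2 = 2^n-6$, and $m(|Z(G)|-1) = 2^{n-2}$, reproducing both displayed spectra.
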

\begin{proof}
It was shown in \cite[Proposition 1]{Dutta16} that
 $\Gamma_{QD_{2^n}} = K_{2^{n - 1} - 2} \sqcup 2^{n - 2} K_2$. Hence,  the result follows from  Theorem \ref{prethm1}.
\end{proof}

\begin{proposition}\label{psl}
The Laplacian spectrum and signless Laplacian spectrum of the commuting graph of the projective special linear group  $PSL(2, 2^k)$, where $k \geq 2$,   are given by
\[
 \{0^{2^{2k} + 2^k + 1}, (2^k - 1)^{2^{2k} - 2^k - 2}, (2^k - 2)^{2^{k - 1}(2^{2k} - 2^{k + 1} - 3)}, (2^k)^{2^{k - 1}(2^{2k} - 2^{k + 1} + 1)}\} 
 \text{ and}
\] 
\begin{align*}
\{(2^{k + 1} - 4)^{2^k + 1}, (2^k - 3)^{2^{2k} - 2^k - 2}, & (2^{k + 1} - 6)^{2^{k - 1}(2^k + 1)}, (2^k - 4)^{2^{k - 1}(2^{2k} - 2^{k + 1} - 3)},\\
& (2^{k + 1} - 2)^{2^{k - 1}(2^k - 1)}, (2^k - 2)^{2^{k - 1}(2^{2k} - 2^{k + 1} + 1)}\} \text{ respectively.}
\end{align*}
\end{proposition}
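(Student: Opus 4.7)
The plan is to proceed exactly as in Propositions 2.6 and 2.7: identify $\Gamma_{PSL(2,2^k)}$ explicitly as a disjoint union of complete graphs, and then feed this decomposition into Theorem 2.1. Specifically, I would first establish (or invoke from \cite{Dutta16}) that
\[
\Gamma_{PSL(2,2^k)} \;=\; (2^k+1)K_{2^k-1} \;\sqcup\; 2^{k-1}(2^k+1)K_{2^k-2} \;\sqcup\; 2^{k-1}(2^k-1)K_{2^k}.
\]
The justification for this decomposition is the classical maximal-abelian subgroup structure of $PSL(2,q)$ for $q=2^k$ (which has trivial center in characteristic $2$): every non-identity element lies in a unique maximal abelian subgroup, and these subgroups fall into three conjugacy classes, namely the $q+1$ Sylow $2$-subgroups (elementary abelian of order $q$), the $q(q+1)/2$ split tori (cyclic of order $q-1$), and the $q(q-1)/2$ non-split tori (cyclic of order $q+1$). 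Two distinct non-identity elements commute iff they lie in a common such subgroup, yielding the three batches of cliques $K_{q-1}$, $K_{q-1-1}\!=\!K_{2^k-2}$, and $K_{q+1-1}\!=\!K_{2^k}$ in the stated multiplicities.

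Second, I would apply Theorem 2.1 directly with the three parameter pairs $(l_1,m_1)=(2^k+1,\,2^k-1)$, $(l_2,m_2)=(2^{k-1}(2^k+1),\,2^k-2)$, $(l_3,m_3)=(2^{k-1}(2^k-1),\,2^k)$. This gives $0$ with multiplicity $l_1+l_2+l_3$ in the Laplacian spectrum, and each $m_i$ with multiplicity $l_i(m_i-1)$; similarly the signless Laplacian splits into the six pieces $(2m_i-2)^{l_i}$ and $(m_i-2)^{l_i(m_i-1)}$ for $i=1,2,3$.

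Third, the remaining work is arithmetic simplification. The key identities I expect to use repeatedly are $(2^k-2)(2^k+1)=2^{2k}-2^k-2$, $(2^k-3)(2^k+1)=2^{2k}-2^{k+1}-3$, and $(2^k-1)^2=2^{2k}-2^{k+1}+1$; together with $l_1+l_2+l_3=(2^k+1)(1+2^{k-1})+2^{k-1}(2^k-1)=2^{2k}+2^k+1$. These bring the raw output of Theorem 2.1 into exactly the form stated in the proposition.

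The main obstacle is not the spectral computation, which is mechanical once the graph is decomposed, but justifying the structural decomposition of $\Gamma_{PSL(2,2^k)}$ cleanly. That step relies on the well-known classification of centralizers in $PSL(2,q)$ for $q$ even and on the fact that the three classes of maximal abelian subgroups intersect pairwise in the identity; the cleanest route is simply to cite the explicit computation already carried out in \cite{Dutta16}, after which everything else reduces to the identities above.
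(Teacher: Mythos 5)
Your proposal is correct and follows essentially the same route as the paper: the paper's proof simply cites \cite[Proposition 2]{Dutta16} for the decomposition $\Gamma_{PSL(2,2^k)} = (2^k+1)K_{2^k-1} \sqcup 2^{k-1}(2^k+1)K_{2^k-2} \sqcup 2^{k-1}(2^k-1)K_{2^k}$ and then invokes Theorem \ref{prethm1}. Your additional justification of the decomposition via the maximal abelian subgroups of $PSL(2,q)$ and the arithmetic identities are all accurate, just more detail than the paper supplies.
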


\begin{proof}
It was shown in \cite[Proposition 2]{Dutta16} that 
\[
\Gamma_{PSL(2, 2^k)} = (2^k + 1)K_{2^k - 1} \sqcup 2^{k - 1}(2^k + 1)K_{2^k - 2} \sqcup 2^{k - 1}(2^k - 1)K_{2^k}.
\]
 Hence, the result follows from   Theorem \ref{prethm1}.
\end{proof}

\begin{proposition}
The Laplacian spectrum and signless Laplacian spectrum of the commuting graph of the general linear group  $GL(2, q)$, where $q = p^n > 2$ and $p$ is a prime integer,   are given by
\[
 \{0^{q^2 + q + 1}, (q^2 - 3q + 2)^{\frac{q(q + 1)(q^2 - 3q + 1)}{2}}, (q^2 - q)^{\frac{q(q - 1)(q^2 - q - 1)}{2}}, (q^2 - 2q + 1)^{q(q + 1)(q - 2)}\} \text{ and}
\]
\begin{align*}
\{(2q^2 - 6q - 2)^{\frac{q(q + 1)}{2}}, (q^2 - 3q)&^{\frac{q(q + 1)(q^2 - 3q + 1)}{2}}, (2q^2 - 2q - 2)^{\frac{q(q - 1)}{2}}, (q^2 - q - 2)^{\frac{q(q - 1)(q^2 - q - 1)}{2}},\\
& (2q^2 - 4q)^{q + 1}, (q^2 + 2q -1)^{q(q + 1)(q - 2)}\} \text{ respectively.}
\end{align*}

\end{proposition}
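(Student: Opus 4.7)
My plan is to follow the same template used in the preceding propositions of this section: first express $\Gamma_{GL(2,q)}$ as a disjoint union of complete graphs, then invoke Theorem \ref{prethm1} to read off both the Laplacian and signless Laplacian spectra. Since $GL(2,q)$ is non-abelian for $q > 2$ and its centralizer structure is classical, I would simply cite the structural decomposition proved in \cite{Dutta16} rather than re-deriving it, just as Proposition \ref{order-pq}, Proposition \ref{semid}, and Proposition \ref{psl} do.

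To recall that decomposition in outline, note that $Z(GL(2,q))$ consists of the scalar matrices and has order $q-1$, and every non-central element $g \in GL(2,q)$ has centralizer equal to the unique maximal abelian subgroup containing $g$. These maximal abelian subgroups come in three conjugacy types: split tori of order $(q-1)^2$, non-split tori of order $q^2 - 1$, and centralizers $Z(GL(2,q)) \cdot U$ of non-trivial unipotent elements of order $q(q-1)$. Removing the center turns each such centralizer into a clique of size $(q-1)(q-2)$, $q(q-1)$, and $(q-1)^2$ respectively, and distinct maximal centralizers of non-central elements are vertex-disjoint. Counting via normalizers (the Weyl group acts with index $2$ on each torus, and unipotent radicals are in bijection with Borel subgroups) yields exactly $\tfrac{q(q+1)}{2}$ split tori, $\tfrac{q(q-1)}{2}$ non-split tori, and $q+1$ unipotent-type centralizers, so
\begin{equation*}
\Gamma_{GL(2,q)} = \tfrac{q(q+1)}{2} K_{(q-1)(q-2)} \sqcup \tfrac{q(q-1)}{2} K_{q(q-1)} \sqcup (q+1) K_{(q-1)^2}.
\end{equation*}

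With this decomposition in hand, the proof concludes by substituting $l_1 = \tfrac{q(q+1)}{2},\ m_1 = (q-1)(q-2),\ l_2 = \tfrac{q(q-1)}{2},\ m_2 = q(q-1),\ l_3 = q+1,\ m_3 = (q-1)^2$ into the formulas of Theorem \ref{prethm1}. The zero eigenvalue then appears in $\L-spec$ with multiplicity $l_1+l_2+l_3 = q^2+q+1$, while the remaining exponents in both spectra are read off as $l_i(m_i - 1)$; for instance $\tfrac{q(q+1)}{2}((q-1)(q-2)-1) = \tfrac{q(q+1)(q^2-3q+1)}{2}$, and similarly for the other two components.

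The main obstacle is purely bookkeeping rather than conceptual: one must carefully expand each $l_i(m_i-1)$ and $2m_i - 2$ and verify that the resulting values match the ones stated in the proposition, while also checking that the three size parameters $m_1, m_2, m_3$ are pairwise distinct for $q > 2$ so that the multisets take the compact form displayed. Beyond this verification and the appeal to the cited structural result, no additional ideas are required.
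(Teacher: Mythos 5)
Your proposal is correct and takes essentially the same route as the paper: the paper's proof simply cites the decomposition $\Gamma_{GL(2,q)} = \frac{q(q+1)}{2}K_{q^2-3q+2} \sqcup \frac{q(q-1)}{2}K_{q^2-q} \sqcup (q+1)K_{q^2-2q+1}$ from \cite{Dutta16} and applies Theorem \ref{prethm1}, and your centralizer/tori sketch just supplies the derivation the paper leaves to the citation. One remark: the verification you flag as ``bookkeeping'' would actually expose that two entries of the stated signless Laplacian spectrum, $2q^2-6q-2$ and $q^2+2q-1$, disagree with what Theorem \ref{prethm1} yields, namely $2m_1-2 = 2q^2-6q+2$ and $m_3-2 = q^2-2q-1$; these appear to be typos in the proposition rather than flaws in your argument.
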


\begin{proof}
It was shown in \cite[Proposition 3]{Dutta16} that 
\[
\Gamma_{GL(2, q)} = \frac{q(q + 1)}{2}K_{q^2 - 3q + 2} \sqcup \frac{q(q - 1)}{2}K_{q^2 - q} \sqcup (q + 1)K_{q^2 - 2q + 1}.
\]
Hence, the result follows from  Theorem \ref{prethm1}.
\end{proof}

\begin{proposition}\label{Hanaki1}
Let $F = GF(2^n), n \geq 2$ and $\vartheta$ be the Frobenius  automorphism of $F$, i. e., $\vartheta(x) = x^2$ for all $x \in F$. Let $A(n, \vartheta)$ denote the group  
\[
 \left\lbrace U(a, b) = \begin{bmatrix}
        1 & 0 & 0\\
        a & 1 & 0\\
        b & \vartheta(a) & 1
       \end{bmatrix} : a, b \in F \right\rbrace.
\] 
under matrix multiplication given by $U(a, b)U(a', b') = U(a + a', b + b' + a'\vartheta(a))$. Then
\begin{align*}
\L-spec(\Gamma_{A(n, \vartheta)}) =& \{0^{2^n - 1}, (2^n)^{2^{2n} - 2^{n + 1} + 1}\}
 \text{ and } \\
\Q-spec(\Gamma_{A(n, \vartheta)}) = &\{(2^{n + 1} - 2)^{2^n - 1}, (2^n - 2)^{2^{2n} - 2^{n + 1} + 1}\}.
\end{align*}
\end{proposition}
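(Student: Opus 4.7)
The plan is to follow the same template used in every preceding proposition of this section: first identify $\Gamma_{A(n,\vartheta)}$ as a disjoint union of complete graphs, and then feed this decomposition into Theorem \ref{prethm1}. Reading the claimed multiplicities backwards through Theorem \ref{prethm1} tells us the right guess: one wants
\[
\Gamma_{A(n, \vartheta)} = (2^n - 1)\, K_{2^n}.
\]
Indeed, plugging $l_1 = 2^n - 1$ and $m_1 = 2^n$ into Theorem \ref{prethm1} reproduces the claimed Laplacian and signless Laplacian spectra, since $l_1(m_1 - 1) = (2^n - 1)^2 = 2^{2n} - 2^{n+1} + 1$ and $2m_1 - 2 = 2^{n+1} - 2$, $m_1 - 2 = 2^n - 2$.

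So the only real work is verifying this structural description. In the style of the preceding propositions, this is almost surely available as part of the spectrum computation in \cite{Dutta16}, and the cleanest proof is to quote it verbatim. If that citation is not sufficient, a short self-contained argument from the multiplication rule suffices. From $U(a,b)U(a',b') = U(a + a',\, b + b' + a'\vartheta(a))$ one sees that $U(a,b)$ and $U(a',b')$ commute if and only if $a'\vartheta(a) = a\vartheta(a')$, i.e.\ $a'a^2 = a(a')^2$ in the characteristic-two field $F$. Since $F$ is a field, this equation is equivalent to $a = 0$, $a' = 0$, or $a = a'$.

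From this the decomposition falls out: the center is $\{U(0,b) : b \in F\}$, which has order $|F| = 2^n$, leaving $(2^n - 1) \cdot 2^n$ non-central vertices; and two non-central vertices are adjacent exactly when they share the same (nonzero) value of the first coordinate $a$. Thus the non-central vertices split into $2^n - 1$ cliques of size $2^n$, one for each $a \in F^{\times}$, proving $\Gamma_{A(n,\vartheta)} = (2^n - 1)K_{2^n}$. An application of Theorem \ref{prethm1} then completes the proof. The only real obstacle is bookkeeping: ensuring that the structural description of $\Gamma_{A(n,\vartheta)}$ is either cited from \cite{Dutta16} or bridged by the short centralizer calculation above, so that no gap remains before Theorem \ref{prethm1} is invoked.
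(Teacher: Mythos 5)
Your proposal is correct and follows the paper's proof exactly: the paper cites $\Gamma_{A(n,\vartheta)} = (2^n-1)K_{2^n}$ from \cite[Proposition 4]{Dutta16} and applies Theorem \ref{prethm1}, which is precisely your primary route. Your supplementary centralizer computation (commuting iff $a'a^2 = a(a')^2$, hence the cliques indexed by $a \in F^\times$) is a valid self-contained verification of the cited decomposition, but it is not needed beyond what the citation already provides.
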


\begin{proof}
It was shown in \cite[Proposition 4]{Dutta16} that
 $\Gamma_{A(n, \vartheta)} = (2^n - 1)K_{2^n}$. Hence the result follows from     Theorem \ref{prethm1}.
\end{proof}

\begin{proposition}\label{Hanaki2}
Let $F = GF(p^n)$ where $p$ is a prime. Let $A(n, p)$ denote    the  group 
\[
\left\lbrace V(a, b, c) = \begin{bmatrix}
        1 & 0 & 0\\
        a & 1 & 0\\
        b & c & 1
       \end{bmatrix} : a, b, c \in F \right\rbrace.
\]
under matrix multiplication $V(a, b, c)V(a', b', c') = V(a + a', b + b' + ca', c + c')$.
Then
\begin{align*}
\L-spec(\Gamma_{A(n, p)}) = & \{0^{p^n + 1}, (p^{2n} - p^n)^{p^{3n} -2p^{n} -  1}\}
\text{ and }\\
\Q-spec(\Gamma_{A(n, p)}) = & \{(2p^{2n} - 2p^n - 2)^{p^n + 1}, (p^{2n} - p^n - 2)^{p^{3n} -2p^{n} -  1}\}.
\end{align*}
\end{proposition}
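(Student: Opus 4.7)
The plan is to reduce everything to Theorem \ref{prethm1} by first showing that $\Gamma_{A(n,p)}$ decomposes as a disjoint union of equally-sized complete graphs, specifically
\[
\Gamma_{A(n,p)} = (p^n + 1)\, K_{p^{2n} - p^n},
\]
and then reading off both spectra from Theorem \ref{prethm1}. A short algebraic simplification $(p^n + 1)(p^{2n} - p^n - 1) = p^{3n} - 2p^n - 1$ converts the raw exponents produced by Theorem \ref{prethm1} into the compact form appearing in the statement.

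To establish the decomposition, I would first compute $Z(A(n,p))$ by expanding $V(a,b,c)V(a',b',c')$ in both orders; the commutation condition $ca' = c'a$ (valid for all $a', c' \in F$) forces $a = c = 0$ in a central element, so $Z(A(n,p)) = \{V(0,b,0) : b \in F\}$, which has order $p^n$. Next, I would describe the centralizer of a non-central element $V(a,b,c)$, splitting into the three cases (i)~$a = 0$, $c \neq 0$, (ii)~$a \neq 0$, $c = 0$, (iii)~both $a$ and $c$ nonzero. In each case the commutation equation $ca' = c'a$ pins down a subgroup of order $p^{2n}$ containing $Z(A(n,p))$, and a short direct check shows that this subgroup is abelian (e.g., in case (iii) the centralizer consists of the matrices $V(ta, b', tc)$ for $t, b' \in F$, which mutually commute). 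Since any two distinct such centralizers intersect exactly in $Z(A(n,p))$, the sets $C_G(x) \setminus Z(G)$ partition the vertex set of $\Gamma_{A(n,p)}$ into cliques of size $p^{2n} - p^n$, and counting vertices gives precisely $(p^{3n} - p^n)/(p^{2n} - p^n) = p^n + 1$ components.

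With the decomposition in place, Theorem \ref{prethm1} immediately yields
\[
\L-spec(\Gamma_{A(n,p)}) = \{0^{p^n + 1},\, (p^{2n} - p^n)^{(p^n + 1)(p^{2n} - p^n - 1)}\}
\]
and the corresponding signless Laplacian spectrum, matching the asserted formulas after the exponent simplification above. The main obstacle I anticipate is keeping the centralizer case analysis clean: the key point is that every centralizer of a non-central element is abelian of order $p^{2n}$, and this is what lets me view each connected component of $\Gamma_{A(n,p)}$ as a clique of the claimed fixed size rather than merely as a connected subgraph. This mirrors the treatment of $A(n,\vartheta)$ in Proposition \ref{Hanaki1}, with the Frobenius twist on the $(3,2)$-entry replaced by a third free parameter $c$, so I expect bookkeeping rather than any deeper idea to be the only hurdle.
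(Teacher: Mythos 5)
Your proof is correct and follows essentially the same route as the paper: the paper simply cites \cite[Proposition 1]{Dutta16} for the decomposition $\Gamma_{A(n,p)} = (p^n+1)K_{p^{2n}-p^n}$ and then invokes Theorem \ref{prethm1}. You supply the centralizer computation that the paper outsources to that citation (and your details check out, including the key commutation condition $ca' = c'a$ and the count of $p^n+1$ lines in $F^2$), but the underlying argument is identical.
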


\begin{proof}
It was shown in \cite[Proposition 1]{Dutta16} that $\Gamma_{A(n, p)} =  (p^n + 1)K_{p^{2n} - p^n}.$ 
Hence the result follows  from   Theorem \ref{prethm1}.
\end{proof}

Our computations reveal that all the groups considered above are both L-integral  and Q-integral. Also, it was shown in \cite{Dutta16,DN16} that theses groups are integral. Hence, all these groups are super integral.

A group $G$ is called an AC-group if $C_G(x)$ is abelian for all $x \in G\setminus Z(G)$.  Various aspects of AC-groups can be found in \cite{Ab06,das13,Roc75}. In \cite{Dutta16}, the authors have shown that finite AC-groups are integral. In the following two results we shall show that if $G$ is an AC-group or $G$ is isomorphic to $H\times A$, where $H$ is a finite non-abelian AC-group and $A$ is any finite abelian group then $G$ is L-integral as well as Q-integral. Hence, $G$ is super integral.

\begin{theorem}\label{AC-group}
Let $G$ be a finite non-abelian   AC-group.  Then
\begin{align*}
\L-spec(\Gamma_G)  = &\{0^n, (|X_1| - |Z(G)|)^{|X_1| - |Z(G)| - 1}, \dots, (|X_n| - |Z(G)|)^{|X_n| - |Z(G)| - 1}\} \text{ and }\\
\Q-spec(\Gamma_G) = &\{(2(|X_1| - |Z(G)|) - 2)^1, (|X_1| - |Z(G)| - 2)^{|X_1| - |Z(G)| - 1}, \dots,\\
&\hspace{1.5cm} (2(|X_n| - |Z(G)|) - 2)^1, (|X_n| - |Z(G)| - 2)^{|X_n| - |Z(G)| - 1}\}.
\end{align*}
where $X_1,\dots, X_n$ are the distinct centralizers of non-central elements of $G$.
\end{theorem}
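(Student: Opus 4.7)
The plan is to first establish that the commuting graph $\Gamma_G$ of an AC-group decomposes as a disjoint union of complete graphs, one for each distinct centralizer $X_i$ of a non-central element. More precisely, I aim to show
\[
\Gamma_G = K_{|X_1| - |Z(G)|} \sqcup K_{|X_2| - |Z(G)|} \sqcup \cdots \sqcup K_{|X_n| - |Z(G)|},
\]
after which the conclusion follows immediately from Theorem \ref{prethm1} applied with $l_i = 1$ and $m_i = |X_i| - |Z(G)|$ for $1 \le i \le n$.

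The key step is to prove the above decomposition. The crucial observation is that the AC-property forces the sets $X_i \setminus Z(G)$ to partition $G \setminus Z(G)$. First I would check coverage: every non-central $x$ satisfies $x \in C_G(x)$, and $C_G(x)$ appears in the list $\{X_1,\dots,X_n\}$, so $x \in X_i \setminus Z(G)$ for some $i$. Next I would argue disjointness: if $z \in (X_i \setminus Z(G)) \cap (X_j \setminus Z(G))$, pick $x_i, x_j$ with $C_G(x_i) = X_i$ and $C_G(x_j) = X_j$. Then $x_i \in C_G(z)$ and $x_j \in C_G(z)$; because $z$ is non-central, $C_G(z)$ is abelian, so everything in $C_G(z)$ commutes pairwise. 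This forces $X_i \subseteq C_G(x_j) = X_j$ and $X_j \subseteq C_G(x_i) = X_i$, i.e.\ $X_i = X_j$.

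Given the partition, the edge structure of $\Gamma_G$ follows at once: any two distinct non-central elements of a common $X_i$ commute (since $X_i$ is abelian), giving a complete subgraph on $|X_i| - |Z(G)|$ vertices; whereas if $x \in X_i \setminus Z(G)$ and $y \in X_j \setminus Z(G)$ with $i \ne j$, then $xy = yx$ would force $y \in C_G(x) = X_i$, contradicting the disjointness just established. Hence no edges run between different classes, and we get exactly the claimed disjoint union of complete graphs. (This structural description has in fact already been recorded in \cite[Theorem~2.6]{Dutta16}, and may be quoted directly to shorten the argument.)

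Finally I would plug the decomposition into Theorem \ref{prethm1} and simply transcribe the resulting eigenvalues and multiplicities. No real obstacle arises here — it is purely bookkeeping — and the formulas in the statement are precisely what one reads off when each $l_i = 1$ and $m_i = |X_i| - |Z(G)|$. The only mildly delicate point in the entire proof is the partition argument above, since one has to use the abelianness of $C_G(z)$ for a non-central witness $z$ in both directions of the equality $X_i = X_j$.
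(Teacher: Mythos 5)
Your proposal is correct and follows essentially the same route as the paper: the paper simply cites Lemma~1 of \cite{Dutta16} for the decomposition $\Gamma_G = \sqcup_{i=1}^{n} K_{|X_i| - |Z(G)|}$ and then invokes Theorem~\ref{prethm1}, exactly as you do. The only difference is that you supply a (correct) self-contained proof of that decomposition via the partition of $G \setminus Z(G)$ by the sets $X_i \setminus Z(G)$, which the paper leaves to the cited reference.
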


\begin{proof}
By Lemma 1 in \cite{Dutta16}, we have $\Gamma_G =  \overset{n}{\underset{i = 1}{\sqcup}}K_{|X_i| - |Z(G)|}$.
Therefore, the result follows from Theorem \ref{prethm1}.
\end{proof}

\begin{corollary}\label{AC-cor}
Let $G \cong H\times A$ where $H$ is a finite non-abelian  AC-group and $A$ is any finite abelian group.  Then  
\begin{align*}
\L-spec(\Gamma_G)  = &\{0^n, (|A|(|X_1| - |Z(H)|))^{|A|(|X_1| - |Z(H)|) - 1}, \dots,\\
&\hspace{4.5cm} (|A|(|X_n| - |Z(H)|))^{|A|(|X_n| - |Z(H)|) - 1}\} \text{ and }\\
\Q-spec(\Gamma_G) = &\{(2|A|(|X_1| - |Z(H)|) - 2)^1, (|A|(|X_1| - |Z(H)|) - 2)^{|A|(|X_1| - |Z(H)|) - 1}, \dots,\\
&\hspace{.7cm} (2|A|(|X_n| - |Z(H)|) - 2)^1, (|A|(|X_n| - |Z(H)|) - 2)^{|A|(|X_n| - |Z(H)|) - 1}\}.
\end{align*}
where $X_1,\dots, X_n$ are the distinct centralizers of non-central elements of $H$.  
\end{corollary}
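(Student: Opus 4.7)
The plan is to reduce the corollary to Theorem \ref{AC-group} by verifying that $G \cong H \times A$ is itself a finite non-abelian AC-group and then identifying its centralizers and center in terms of those of $H$.

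First I would observe that for any $g = (h,a) \in G$, the centralizer decomposes as $C_G(g) = C_H(h) \times A$ since $A$ is abelian (so every element of $A$ commutes with $a$, while the commutation in the first coordinate is governed solely by $h$). In particular, $Z(G) = Z(H) \times A$. From this, an element $(h,a)$ is non-central in $G$ if and only if $h$ is non-central in $H$. Since $H$ is an AC-group, $C_H(h)$ is abelian for every such $h$, whence $C_G((h,a)) = C_H(h) \times A$ is abelian as well, so $G$ is a (non-abelian) AC-group.

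Next I would use this correspondence to count the distinct centralizers of non-central elements of $G$. If $X_1,\dots,X_n$ are the distinct centralizers of non-central elements of $H$, then the distinct centralizers of non-central elements of $G$ are exactly $X_1 \times A,\dots,X_n \times A$, and their sizes are $|X_i \times A| = |A||X_i|$. Consequently
\equa{
|X_i \times A| - |Z(G)| = |A||X_i| - |A||Z(H)| = |A|(|X_i| - |Z(H)|)
}
for each $i$.

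Finally, substituting these quantities into the formulas provided by Theorem \ref{AC-group} (with $G$ playing the role of the AC-group and $X_i \times A$ the role of $X_i$) yields exactly the L-spectrum and Q-spectrum listed in the statement. The argument is essentially bookkeeping; the only point that requires a moment's care is the verification that the product $H \times A$ remains an AC-group and that distinct $X_i$'s give rise to distinct centralizers $X_i \times A$ in $G$, both of which follow immediately from the decomposition $C_G((h,a)) = C_H(h) \times A$.
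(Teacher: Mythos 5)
Your proposal is correct and follows essentially the same route as the paper: identify $Z(H\times A)=Z(H)\times A$ and the centralizers $X_i\times A$, deduce that $H\times A$ is again an AC-group, and then invoke Theorem~\ref{AC-group}. You simply supply the details (the decomposition $C_G((h,a))=C_H(h)\times A$ and the arithmetic $|X_i\times A|-|Z(G)|=|A|(|X_i|-|Z(H)|)$) that the paper leaves as ``easy to see.''
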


\begin{proof}
It is easy to see that $Z(H\times A) = Z(H)\times A$ and $X_1\times A,  X_2 \times A,\dots, X_n \times A$  are the distinct centralizers of non-central elements of $H\times A$.  Therefore, if $H$ is an  AC-group then $H\times A$ is also an  AC-group. Hence, the result follows from Theorem \ref{AC-group}. 
\end{proof}

\section{$n$-centralizer groups}

For a finite group $G$, the set $C_G(x) = \{y \in G : xy = yx\}$ is called the centralizer of an element $x \in G$. Let $|\cent(G)| = |\{C_G(x) : x \in G\}|$, that is the number of distinct centralizers in $G$. A group $G$ is called an $n$-centralizer group if $|\cent(G)| = n$. The study of these groups was initiated by  Belcastro and  Sherman   \cite{bG94} in the year 1994. The readers may conf. \cite{Dutta10} for various results on these groups. In this section, we ask the following question.  

\noindent \textbf{Question 5.}  Can we determine all the positive integers $n$ such that $n$-centralizer groups are super integral?

 In the next three results we shall show that  an $n$-centralizer group is super integral if $n = 4, 5$. Further, we shall show that  a $(p + 2)$-centralizer $p$-group is super integral for any prime $p$. 

\begin{theorem}\label{4-cent}
A finite $4$-centralizer group is super integral.    
\end{theorem}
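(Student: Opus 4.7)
The plan is to reduce the statement to Theorem \ref{main2} via a classical characterization of 4-centralizer groups. Recall the result of Belcastro and Sherman \cite{bG94}: a finite group $G$ satisfies $|\cent(G)| = 4$ if and only if $G/Z(G) \cong \mathbb{Z}_2 \times \mathbb{Z}_2$. This is the crucial structural input, and it lands us squarely in the hypothesis of Theorem \ref{main2} with $p = 2$.

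Given this, the proof proceeds in three short steps. First, quote (or state briefly) the Belcastro--Sherman classification to obtain $G/Z(G) \cong \mathbb{Z}_2 \times \mathbb{Z}_2$. Second, apply Theorem \ref{main2} with $p = 2$ to read off
\[
\L\textnormal{-spec}(\Gamma_G) = \{0^{3},\, (|Z(G)|)^{3|Z(G)| - 3}\}
\]
and
\[
\Q\textnormal{-spec}(\Gamma_G) = \{(2|Z(G)| - 2)^{3},\, (|Z(G)| - 2)^{3|Z(G)| - 3}\}.
\]
Both lists consist entirely of integers, so $\Gamma_G$ is L-integral and Q-integral. Third, combine this with the fact that $\Gamma_G$ is integral, which was already established in \cite{Dutta16} (and is also noted in the summary at the end of Section 2). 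Putting the three spectral integrality statements together yields that $G$ is super integral, as required.

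There is essentially no obstacle here beyond invoking the correct external result; the only point that requires care is ensuring that Belcastro--Sherman's characterization is cited correctly, since the whole argument pivots on the isomorphism type of the central quotient. Once that is in hand, Theorem \ref{main2} does all the spectral work and the conclusion is immediate.
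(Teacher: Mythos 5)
Your proposal is correct and follows exactly the paper's own argument: invoke Belcastro--Sherman to get $G/Z(G) \cong \mathbb{Z}_2 \times \mathbb{Z}_2$, apply Theorem~\ref{main2} with $p=2$ to obtain the same Laplacian and signless Laplacian spectra, and cite \cite{Dutta16} for integrality of the adjacency spectrum. No differences worth noting.
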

\begin{proof}
Let $G$ be a finite $4$-centralizer group. Then, by  \cite[Theorem 2]{bG94}, we have  $\frac{G}{Z(G)} \cong {\mathbb{Z}}_2 \times {\mathbb{Z}}_2$. Therefore, by Theorem \ref{main2}, we have
\[
\L-spec(\Gamma_G) = \{0^{3}, (|Z(G)|)^{3|Z(G)| - 3}\}
\text{ and } 
\Q-spec(\Gamma_G) = \{(2|Z(G)| - 2)^{3}, (|Z(G)| - 2)^{3|Z(G)| - 3}\}.
\]
Also, by \cite[Theorem 3]{Dutta16}, we have $\spec(\Gamma_G) = \{(-1)^{3|Z(G)| - 3}, (|Z(G)| - 1)^{3}\}$. Hence, $G$ is super integral group. 
\end{proof}

\noindent Further, we have the following result.

\begin{corollary}
For any prime $p$,  a finite $(p+2)$-centralizer $p$-group is super integral.

\end{corollary}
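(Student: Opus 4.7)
The plan is to reduce this corollary to Theorem \ref{main2} via a classification of $(p+2)$-centralizer $p$-groups. The key ingredient I would invoke is the result of Belcastro--Sherman \cite{bG94} (or its standard $p$-group analogue): if $G$ is a finite $p$-group with $|\cent(G)| = p+2$, then $\frac{G}{Z(G)} \cong \mathbb{Z}_p \times \mathbb{Z}_p$. This matches the baseline case $p=2$ already used in the proof of Theorem \ref{4-cent}, where the hypothesis $|\cent(G)|=4$ forced $G/Z(G) \cong \mathbb{Z}_2 \times \mathbb{Z}_2$.

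Once the central quotient is identified, Theorem \ref{main2} immediately supplies
\[
\L\text{-}\spec(\Gamma_G) = \{0^{p+1}, ((p-1)|Z(G)|)^{(p^2-1)|Z(G)| - p - 1}\}
\]
and
\[
\Q\text{-}\spec(\Gamma_G) = \{(2(p-1)|Z(G)|-2)^{p+1}, ((p-1)|Z(G)|-2)^{(p^2-1)|Z(G)| - p - 1}\},
\]
and every entry is an integer since $p$ and $|Z(G)|$ are positive integers. So $\Gamma_G$ is both L-integral and Q-integral.

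To close the argument I need the ordinary spectrum as well. Here I would cite the companion computation from \cite{Dutta16} (the integer-spectrum statement used in Theorem \ref{4-cent} is a special case): for groups whose central quotient is $\mathbb{Z}_p \times \mathbb{Z}_p$, the commuting graph is a disjoint union of $p+1$ copies of $K_{(p-1)|Z(G)|}$, giving $\spec(\Gamma_G)=\{(-1)^{(p^2-1)|Z(G)|-p-1}, ((p-1)|Z(G)|-1)^{p+1}\}$, which is integral. Putting the three spectra together yields super integrality.

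The only nontrivial step is the classification input; everything after that is a one-line application of earlier results. If the clean $p$-group statement $|\cent(G)|=p+2 \Rightarrow G/Z(G)\cong \mathbb{Z}_p\times\mathbb{Z}_p$ cannot be cited directly, I would recover it from the elementary fact that a group with central quotient of prime-squared order $p^2$ has exactly $p+2$ distinct centralizers (the $p+1$ preimages of the maximal subgroups of $G/Z(G)$, together with $G$ itself), combined with the standard observation that among $p$-groups this centralizer count is minimized precisely at this structure. This is the only step where I anticipate any real work; the spectral computation itself is routine given Theorem \ref{main2}.
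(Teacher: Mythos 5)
Your proposal follows essentially the same route as the paper: identify $\frac{G}{Z(G)} \cong {\mathbb{Z}}_p \times {\mathbb{Z}}_p$ from the $(p+2)$-centralizer hypothesis, apply Theorem~\ref{main2} for the Laplacian and signless Laplacian spectra, and quote the known adjacency spectrum for the ordinary integrality. The only difference is the source for the classification step: the paper cites Ashrafi's Lemma~2.7 in \cite{ali00} (Belcastro--Sherman covers only the $p=2$ case), but your fallback sketch of that classification is the right idea, so the argument stands.
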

\begin{proof}
Let $G$ be a finite $(p + 2)$-centralizer $p$-group. Then, by   \cite[Lemma 2.7]{ali00}, we have  $\frac{G}{Z(G)} \cong {\mathbb{Z}}_p \times {\mathbb{Z}}_p$. Therefore, by Theorem \ref{main2}, we have
\begin{align*}
\L-spec(\Gamma_G) = &\{0^{p +1}, ((p - 1)|Z(G)|)^{(p^2 - 1)|Z(G)| - p - 1}\}
\text{ and}\\ 
\Q-spec(\Gamma_G) = &\{(2(p - 1)|Z(G)| - 2)^{p + 1}, ((p - 1)|Z(G)| - 2)^{(p^2 - 1)|Z(G)| - p - 1}\}.
\end{align*}
Also, by \cite[Corollary 2.4]{DN16}, we have $\spec(\Gamma_G) = \{(-1)^{(p^2 - 1)|Z(G)| - p - 1}, ((p - 1)|Z(G)| - 1)^{p + 1}\}$. Hence, $G$ is a super integral group.
\end{proof}

\begin{theorem}\label{5-cent}
A finite $5$-centralizer  group is super integral. 
\end{theorem}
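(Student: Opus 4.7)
The plan is to reduce the theorem to two central-factor hypotheses already handled in Section 2. By Belcastro and Sherman's classification \cite[Theorem 4]{bG94}, a finite $5$-centralizer group $G$ satisfies either $G/Z(G) \cong \mathbb{Z}_3 \times \mathbb{Z}_3$ or $G/Z(G) \cong S_3 \cong D_6$; each of these is one of the quotient structures whose commuting graph spectra were computed in Section 2.

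In the first case, substituting $p=3$ into Theorem \ref{main2} immediately yields integer-valued Laplacian and signless Laplacian spectra for $\Gamma_G$, while the adjacency spectrum is integral by \cite[Corollary 2.4]{DN16} (compare also the preceding corollary of the present section). In the second case, setting $m=3$ in Theorem \ref{main4} produces Laplacian and signless Laplacian spectra whose eigenvalues and multiplicities are integer polynomials in $|Z(G)|$, and the integrality of $\spec(\Gamma_G)$ for groups with dihedral central factor is established in \cite{DN16}. In either case all three spectra of $\Gamma_G$ are integer-valued, so $G$ is super integral.

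The argument requires no fresh computation; its content is entirely to identify which of the two quotient types $G$ falls into and to read off the relevant spectra from Theorems \ref{main2} and \ref{main4}. The only delicate point, and the closest thing to an obstacle, is correctly invoking the Belcastro--Sherman dichotomy and confirming that no other possibility for $G/Z(G)$ arises when $|\cent(G)|=5$.
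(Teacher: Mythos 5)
Your proposal is correct and follows essentially the same route as the paper: invoke the Belcastro--Sherman dichotomy $G/Z(G)\cong{\mathbb{Z}}_3\times{\mathbb{Z}}_3$ or $D_6$, then read off the Laplacian and signless Laplacian spectra from Theorems \ref{main2} and \ref{main4} respectively, with integrality of the adjacency spectrum imported from \cite{DN16}. The only difference is a matter of citation granularity (the paper cites \cite[Corollary 2.6]{DN16} directly for integrality of $5$-centralizer groups), which does not affect the argument.
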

\begin{proof}
Let $G$ be a finite $5$-centralizer group. Then by  \cite[Theorem 4]{bG94} we have  $\frac{G}{Z(G)} \cong {\mathbb{Z}}_3 \times {\mathbb{Z}}_3$ or $D_6$. Now, if $\frac{G}{Z(G)} \cong {\mathbb{Z}}_3 \times {\mathbb{Z}}_3$ then  by Theorem \ref{main2} we have
$\L-spec(\Gamma_G) = \{0^4, (2|Z(G)|)^{8|Z(G)| - 4}\}$ and $\Q-spec(\Gamma_G) = \{4|Z(G)| - 2)^4, (2|Z(G)| - 2)^{8|Z(G)| - 4}\}$. If $\frac{G}{Z(G)} \cong D_6$ then by Theorem \ref{main4} we have
$\L-spec(\Gamma_G) = \{0^4, (2|Z(G)|)^{2|Z(G)| - 1}, (|Z(G)|)^{3(|Z(G)| - 1)}\}$
  and  
$\Q-spec(\Gamma_G) =   \{(4|Z(G)| - 2)^1, (2|Z(G)| - 2)^{2|Z(G)| - 1}, (2|Z(G)| - 2)^3, (|Z(G)| - 2)^{3(|Z(G)| - 1)}\}$. Therefore, $G$ is both L-integral and Q-integral. Also, in \cite[Corollary 2.6]{DN16}, it was shown that a  finite $5$-centralizer  group $G$ is integral.  Hence, $G$ is super integral.
\end{proof}

\noindent We conclude this section with the  following corollary.
\begin{corollary}
Let $G$ be a finite non-abelian group and $\{x_1, x_2, \dots, x_r\}$ be a set of pairwise non-commuting elements of $G$ having maximal size. Then $G$ is super integral if $r = 3, 4$. 

\end{corollary}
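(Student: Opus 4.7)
The plan is to reduce this corollary to the already-established Theorems~\ref{4-cent} and~\ref{5-cent} by showing that the hypothesis $r\in\{3,4\}$ forces $G$ to be respectively a $4$-centralizer or a $5$-centralizer group.

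First I would unpack the hypothesis. Given a maximal pairwise non-commuting set $\{x_1,\dots,x_r\}$, the centralizers $C_G(x_i)$ are pairwise distinct: if $C_G(x_i)=C_G(x_j)$ for some $i\ne j$, then $x_i\in C_G(x_i)=C_G(x_j)$ would force $x_i$ and $x_j$ to commute, contradicting the hypothesis. Moreover these centralizers cover $G$: by the maximality of $r$, every $y\in G$ commutes with some $x_i$, so $y\in C_G(x_i)$. Thus $G=\bigcup_{i=1}^{r}C_G(x_i)$ is an irredundant covering by proper subgroups, and $|\cent(G)|\geq r+1$ (including $G$ itself as the centralizer of any central element).

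Next I would invoke the classical structural classifications that pin down $G/Z(G)$ from the covering. For $r=3$, the irredundant covering of $G$ by three proper subgroups is the Scorza-type situation and is known to be equivalent to $G/Z(G)\cong \mathbb{Z}_2\times\mathbb{Z}_2$, which by \cite[Theorem 2]{bG94} is equivalent to $G$ being a $4$-centralizer group. For $r=4$ the analogous classification forces $G/Z(G)\cong\mathbb{Z}_3\times\mathbb{Z}_3$ or $D_6$, which by \cite[Theorem 4]{bG94} is equivalent to $G$ being a $5$-centralizer group. The converse implications (that the listed central quotients yield $r=3$ or $r=4$) are straightforward: pick one non-central representative from each maximal abelian subgroup of $G/Z(G)$ to obtain the required pairwise non-commuting set, and verify maximality directly from the coset decomposition.

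Finally, Theorem~\ref{4-cent} covers the case $r=3$ and Theorem~\ref{5-cent} covers $r=4$, so in each case $G$ is super integral, as required.

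The main obstacle is the intermediate classification step: while the construction of pairwise distinct centralizers is elementary, upgrading the bare covering $G=\bigcup C_G(x_i)$ into the precise statement $G/Z(G)\cong \mathbb{Z}_2\times\mathbb{Z}_2$ (or $\mathbb{Z}_3\times\mathbb{Z}_3$, $D_6$) requires combining Scorza's theorem on three-fold subgroup covers with a verification that the normal intersection $\bigcap_{i=1}^{r} C_G(x_i)$ coincides with $Z(G)$; this is the only genuinely non-trivial piece of the argument.
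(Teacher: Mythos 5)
Your overall route is the same as the paper's: convert the hypothesis on $r$ into the statement that $G$ is a $4$- or $5$-centralizer group and then invoke Theorems~\ref{4-cent} and~\ref{5-cent}. The paper does this in one line by citing Lemma~2.4 of \cite{ajH07}, which asserts exactly the correspondence between $r=3,4$ and $|\cent(G)|=4,5$. Your elementary observations are correct and are indeed the opening moves of any proof of that lemma: the $C_G(x_i)$ are pairwise distinct, maximality forces $G=\bigcup_{i=1}^{r}C_G(x_i)$, and the cover is irredundant because $x_i\in C_G(x_i)\setminus\bigcup_{j\ne i}C_G(x_j)$.

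As a self-contained argument, however, the proposal has a genuine gap precisely where you flag one. Scorza's theorem gives $G/N\cong\mathbb{Z}_2\times\mathbb{Z}_2$ with $N=C_G(x_1)\cap C_G(x_2)\cap C_G(x_3)$, and you still must prove $N=Z(G)$. That is not immediate: an element $y\in N$ commutes with $x_1,x_2,x_3$, but to place $y$ in $Z(G)$ you need it to commute with every element of every $C_G(x_i)$, which requires an additional argument (for instance that each $C_G(x_i)$ is abelian, or that $\langle x_1,x_2,x_3\rangle=G$ --- neither follows from what is written; the images of the $x_i$ generate $G/N$, which only gives $\langle x_1,x_2,x_3\rangle N=G$). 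The $r=4$ case is shakier still: the Greco--Cohn classification of irredundant $4$-covers delivers the quotients $\mathbb{Z}_3\times\mathbb{Z}_3$ or $S_3$ only after one rules out the possibility of a $\mathbb{Z}_2\times\mathbb{Z}_2$ quotient coming from a $3$-cover, and again the relevant kernel must be identified with $Z(G)$. Since the statement you need is exactly Lemma~2.4 of \cite{ajH07}, the cleanest repair is to cite it, as the paper does; otherwise the covering-theoretic derivation has to be completed in full.
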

\begin{proof}
By Lemma 2.4 in \cite{ajH07}, we have that $G$ is a $4$-centralizer or a $5$-centralizer group according as  $r = 3$ or $4$. Hence the result follows from Theorem \ref{4-cent} and Theorem \ref{5-cent}.  
\end{proof}
\section{Commutativity degree and super integral group}
Let $G$ be a finite group. The commutativity degree of $G$ is given by the ratio 
\[
\Pr(G) = \frac{|\{(x, y) \in G \times G : xy = yx\}|}{|G|^2}.
\]
The origin of the commutativity degree of a finite group lies in a paper of Erd$\ddot{\rm o}$s and Tur$\acute{\rm a}$n (see \cite{Et68}). Readers may conf. \cite{Caste10,Dnp13,Nath08} for various results on $\Pr(G)$. In this section, we ask the following question.

\noindent \textbf{Question 6.}  Can we determine all the positive rational numbers $r$ such that any group $G$ with $\Pr(G) = r$ is  super integral?

The following theorems give some rational numbers $r$ such that $G$ is super integral if $\Pr(G) = r$.
\begin{theorem}
If $\Pr(G) \in \{\frac{5}{14}, \frac{2}{5}, \frac{11}{27}, \frac{1}{2}, \frac{5}{8}\}$ then $G$ is super integral.
\end{theorem}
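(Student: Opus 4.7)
My plan is to reduce the theorem to the computations already carried out in Section~2 by invoking the classical fact that each of the five listed values of $\Pr(G)$ forces the isomorphism type of the central quotient $G/Z(G)$. Specifically, I would use the well-known characterizations (due to Gustafson, MacHale, Rusin, and others, as surveyed in \cite{Dnp13}) that for a finite non-abelian group $G$,
\begin{equation*}
\begin{aligned}
\Pr(G) = \tfrac{5}{8} &\iff G/Z(G) \cong \mathbb{Z}_2 \times \mathbb{Z}_2, \\
\Pr(G) = \tfrac{1}{2} &\iff G/Z(G) \cong D_6, \\
\Pr(G) = \tfrac{11}{27} &\iff G/Z(G) \cong \mathbb{Z}_3 \times \mathbb{Z}_3, \\
\Pr(G) = \tfrac{2}{5} &\iff G/Z(G) \cong D_{10}, \\
\Pr(G) = \tfrac{5}{14} &\iff G/Z(G) \cong Sz(2).
\end{aligned}
\end{equation*}
These follow from the identity $\Pr(G) = k(G)/|G|$ together with the fact that the conjugacy class structure of $G$ is controlled by that of $G/Z(G)$ when $G$ is capable.

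Once the central quotient is pinned down, the proof is a case-by-case application of the spectral computations already proved. For $\Pr(G) \in \{5/8, 11/27\}$, Theorem~\ref{main2} (with $p=2$ and $p=3$ respectively) yields that $\L\text{-}\spec(\Gamma_G)$ and $\Q\text{-}\spec(\Gamma_G)$ consist of integers. For $\Pr(G) \in \{1/2, 2/5\}$, Theorem~\ref{main4} (with $m=3$ and $m=5$ respectively) does the same. For $\Pr(G) = 5/14$, Theorem~\ref{order-20} gives the integrality of both spectra. In every case the multiplicities and eigenvalues are explicit integer expressions in $|Z(G)|$, so both the Laplacian and signless Laplacian spectra are automatically integral.

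To finish, I need integrality of the ordinary spectrum $\spec(\Gamma_G)$ in each case; but this was already established in \cite{Dutta16, DN16} for precisely the same five isomorphism types of central quotient (cf.\ the statements of Theorems~\ref{4-cent} and \ref{5-cent} where these integrality results are invoked). Combining the three integrality statements yields super integrality of $G$, completing the proof.

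I expect the main obstacle to be purely bibliographic rather than mathematical: one must verify that each of the five rational numbers really does force a unique isomorphism type of $G/Z(G)$ (so that no case is missed and no extraneous group slips in). In particular, the value $\Pr(G) = 5/14$ needs the Suzuki-group identification, and $\Pr(G) = 2/5$ needs $D_{10}$ (rather than, say, $\mathbb{Z}_5 \rtimes \mathbb{Z}_4$ or another group of order $10$ times a central factor). Once the classification is correctly cited, the rest is a routine matching of Section~2 theorems to the five cases.
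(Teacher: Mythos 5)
Your overall strategy is exactly the paper's: pin down $G/Z(G)$ from the value of $\Pr(G)$ via the Rusin/MacHale-type classifications, then read off L- and Q-integrality from Theorems \ref{main2} and \ref{main4} (and the $Sz(2)$ computation) and quote the adjacency-spectrum integrality from \cite{Dutta16,DN16}. However, one entry of your correspondence table is wrong, and it is precisely the step you flagged as delicate: $\Pr(G)=\frac{5}{14}$ does \emph{not} force $G/Z(G)\cong Sz(2)$. If $G/Z(G)\cong Sz(2)$ then $\Gamma_G=K_{4|Z(G)|}\sqcup 5K_{3|Z(G)|}$, so $|G|=20|Z(G)|$ and $\sum_{x\in G}|C_G(x)|=|Z(G)|^2(20+20+60)=100|Z(G)|^2$, giving $\Pr(G)=\frac{100}{400}=\frac{1}{4}$, not $\frac{5}{14}$. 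The correct statement (and the one the paper uses, citing Rusin and Nath) is that $\Pr(G)=\frac{5}{14}$ forces $G/Z(G)\cong D_{14}$: for $G/Z(G)\cong D_{2m}$ one computes $\Pr(G)=\frac{m+3}{4m}$, which equals $\frac{5}{14}$ exactly at $m=7$. So your case analysis treats a vacuous case and omits the real one. The omission is harmless in substance --- the $D_{14}$ case is covered by the very same Theorem \ref{main4} you already invoke for $\frac{1}{2}$ and $\frac{2}{5}$ --- but as written the $\frac{5}{14}$ case rests on a false classification claim and must be repaired.

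On the other hand, your identification $\Pr(G)=\frac{11}{27}\iff G/Z(G)\cong\mathbb{Z}_3\times\mathbb{Z}_3$ is correct (it is MacHale's value $\frac{p^2+p-1}{p^3}$ at $p=3$) and in fact corrects the paper, whose proof lists $D_8$ among the possible central quotients even though a group with $G/Z(G)\cong D_8$ has $\Pr(G)=\frac{7}{16}$, which is not one of the five values. Either way the needed integrality follows from Theorem \ref{main2} with $p=3$, so the theorem itself stands once the $\frac{5}{14}$ entry is fixed.
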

\begin{proof}
If $\Pr(G) \in \{\frac{5}{14}, \frac{2}{5}, \frac{11}{27}, \frac{1}{2}, \frac{5}{8}\}$ then as shown in \cite[pp. 246]{Rusin79} and \cite[pp. 451]{Nath13}, we have $\frac{G}{Z(G)}$ is isomorphic to one of the groups in $\{D_{14}, D_{10}, D_8, D_6, {\mathbb{Z}}_2\times {\mathbb{Z}}_2\}$. If $\frac{G}{Z(G)}$ is isomorphic to $D_{14}, D_{10}, D_8$ or $D_6$ then by \cite[Theorem 2.5]{DN16}, we have $G$ is integral and by Theorem \ref{main4}, we have $G$ is both L-integral and Q-integral. Hence, in this case $G$ is super integral. If $\frac{G}{Z(G)}$ is isomorphic to ${\mathbb{Z}}_2\times {\mathbb{Z}}_2$ then by \cite[Theorem 3]{Dutta16}, it follows that $G$ is integral. Also, by Theorem \ref{main2} we have  $G$ is both L-integral and Q-integral. Hence,  $G$ is super integral. This completes the proof.
\end{proof}
\begin{theorem}
Let $G$ be a finite group and $p$ the smallest prime divisor of $|G|$. If $\Pr(G) = \frac{p^2 + p - 1}{p^3}$ then $G$ is super integral. 
\end{theorem}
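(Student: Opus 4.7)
The plan is to reduce this statement to the already-proved case $G/Z(G) \cong \mathbb{Z}_p \times \mathbb{Z}_p$ by invoking a classical structure theorem for groups whose commutativity degree attains this ``upper bound'' value. Specifically, a well-known result of Lescot (extending Gustafson's $5/8$ theorem, which is the case $p=2$) asserts that if $p$ is the smallest prime divisor of $|G|$ and $\Pr(G) = \frac{p^2+p-1}{p^3}$, then $G/Z(G) \cong \mathbb{Z}_p \times \mathbb{Z}_p$. This is the only structural ingredient I need beyond what has already been established in this paper.

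Once that reduction is made, the result is essentially a one-line consequence of earlier computations. First, Theorem \ref{main2} yields
\[
\L-spec(\Gamma_G) = \{0^{p+1}, ((p-1)|Z(G)|)^{(p^2-1)|Z(G)|-p-1}\}
\]
and
\[
\Q-spec(\Gamma_G) = \{(2(p-1)|Z(G)|-2)^{p+1}, ((p-1)|Z(G)|-2)^{(p^2-1)|Z(G)|-p-1}\},
\]
both of which contain only integers, so $G$ is L-integral and Q-integral. Second, by \cite[Corollary 2.4]{DN16} (already cited in the proof of the $(p+2)$-centralizer $p$-group corollary above), the central-quotient hypothesis $G/Z(G) \cong \mathbb{Z}_p \times \mathbb{Z}_p$ gives
\[
\spec(\Gamma_G) = \{(-1)^{(p^2-1)|Z(G)|-p-1}, ((p-1)|Z(G)|-1)^{p+1}\},
\]
so $G$ is integral. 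Combining the three gives super integrality.

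The only non-trivial step is the appeal to the structural result on $G/Z(G)$, and this is what the proof should lead with; the remainder is just bookkeeping on spectra already recorded in Section 2 and in \cite{DN16}. I would cite the appropriate source (Lescot's paper, or the version in \cite{Nath13} which is already in the paper's bibliography and was used for the analogous classification in the previous theorem) and then conclude with the two displayed spectrum formulas and the integrality citation. No new computation is required.
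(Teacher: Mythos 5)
Your proposal is correct and follows essentially the same route as the paper: reduce via the structure theorem for $\Pr(G)=\frac{p^2+p-1}{p^3}$ to $G/Z(G)\cong \mathbb{Z}_p\times\mathbb{Z}_p$, then apply Theorem \ref{main2} for L- and Q-integrality and the spectrum result from \cite{DN16} for integrality. The only difference is bibliographic --- the paper attributes the structural result to MacHale \cite[Theorem 3]{dM74} rather than Lescot, and cites \cite[Theorem 2.1]{DN16} rather than Corollary 2.4 --- which does not affect the argument.
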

\begin{proof}
If $\Pr(G) = \frac{p^2 + p - 1}{p^3}$ then by \cite[Theorem 3]{dM74} we have $\frac{G}{Z(G)}$ is isomorphic to ${\mathbb{Z}}_p\times {\mathbb{Z}}_p$. Now, by \cite[Theorem 2. 1]{DN16}, we have that $G$ is integral. Again, by Theorem \ref{main2}, it follows that $G$ is L-integral as well as Q-integral. Hence,  $G$ is super integral.
\end{proof}
We conclude this section by the following result.
\begin{theorem}
If $G$ is a non-solvable group with $\Pr(G) = \frac{1}{12}$ then $G$ is super integral.
\end{theorem}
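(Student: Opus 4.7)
The plan is to reduce the statement to the AC-group machinery developed in Section~2, via a structural classification of non-solvable groups of commutativity degree $\frac{1}{12}$. Concretely, I will invoke the known fact (due to Rusin \cite{Rusin79}, and reiterated in the surveys cited by the authors) that if $G$ is non-solvable with $\Pr(G) = \frac{1}{12}$, then $G \cong A_5 \times A$ for some finite abelian group $A$. A quick sanity check: $k(A_5) = 5$, so $\Pr(A_5 \times A) = \tfrac{k(A_5)|A|}{|A_5||A|} = \tfrac{5}{60} = \tfrac{1}{12}$, which matches. No other candidate survives, since for any central extension with $G/Z(G) \cong A_5$ not of product type (e.g.\ $SL(2,5)$) the commutativity degree is different.

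Granted this reduction, the next step is to observe that $A_5$ is a non-abelian AC-group: every non-identity element of $A_5$ has order $2$, $3$ or $5$, and in each case the centralizer is a cyclic group of that order (so in particular abelian). Therefore $G \cong A_5 \times A$ falls directly under the hypothesis of Corollary~\ref{AC-cor}, which immediately yields that $\Gamma_G$ is L-integral and Q-integral (all eigenvalues listed there are integer linear combinations of $|A|$ and the integers $|X_i| - |Z(A_5)| = |X_i|$).

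To finish, I need integrality of $\Gamma_G$ in the ordinary spectral sense. This is already established in \cite{Dutta16}, where it is shown that the commuting graph of a direct product of a finite non-abelian AC-group with an abelian group is integral; I will simply cite that result. Combining the two, $\Gamma_G$ has integer ordinary spectrum, integer Laplacian spectrum and integer signless Laplacian spectrum, so $G$ is super integral.

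The only delicate point is the classification step: I want to make sure the reference used for ``non-solvable plus $\Pr(G) = \frac{1}{12}$ implies $G \cong A_5 \times A$'' is clean. If a direct citation to Rusin is preferred, I will state it as a lemma of the form ``\emph{By \cite[p.~246]{Rusin79}, every non-solvable group $G$ with $\Pr(G) = \frac{1}{12}$ is isomorphic to $A_5 \times A$ for some finite abelian group $A$}'' and then apply Corollary~\ref{AC-cor} together with the integrality statement from \cite{Dutta16}. The rest is bookkeeping.
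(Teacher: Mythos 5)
Your proposal is correct and follows essentially the same route as the paper: reduce via the classification (the paper cites Castelaz's thesis, Proposition 3.3.7, rather than Rusin) to $G \cong A_5 \times B$ with $B$ abelian, observe that $A_5$ is an AC-group, and invoke the AC-group machinery together with the integrality result from \cite{Dutta16}. One small slip worth fixing: the centralizer in $A_5$ of an involution is the Klein four-group of order $4$, not a cyclic group of order $2$, but it is still abelian, so the claim that $A_5$ is an AC-group stands.
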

\begin{proof}
By Proposition 3.3.7 in \cite{Caste10} we have that $G$ is isomorphic to $A_5 \times B$ for some abelian group $B$. Therefore $G$ is an AC-group and hence super integral.
\end{proof}
\section{More Applications}

In this section, we show that
a finite non-abelian group $G$ not isomorphic to $S_4$ is  super integral if its commuting graph  is planar. We also show that $G$ is super integral if its commuting graph   is toroidal. We begin with the following useful  lemma.
\begin{lemma}\label{order16}
Let $G$ be a group isomorphic to any of the following groups
\begin{enumerate}
\item ${\mathbb{Z}}_2 \times D_8$
\item ${\mathbb{Z}}_2 \times Q_8$
\item $M_{16}  = \langle a, b : a^8 = b^2 = 1, bab = a^5 \rangle$
\item ${\mathbb{Z}}_4 \rtimes {\mathbb{Z}}_4 = \langle a, b : a^4 = b^4 = 1, bab^{-1} = a^{-1} \rangle$
\item $D_8 * {\mathbb{Z}}_4 = \langle a, b, c : a^4 = b^2 = c^2 =  1, ab = ba, ac = ca, bc = a^2cb \rangle$
\item $SG(16, 3)  = \langle a, b : a^4 = b^4 = 1, ab = b^{-1}a^{-1}, ab^{-1} = ba^{-1}\rangle$.
\end{enumerate}
Then $\L-spec(\Gamma_G) = \{0^3, 4^9\}$ and $\Q-spec(\Gamma_G) = \{6^3, 2^9\}$.
\end{lemma}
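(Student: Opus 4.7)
The plan is to reduce the entire lemma to a single application of Theorem \ref{main2}. Specifically, I would show that for each of the six groups $G$ listed, one has $|Z(G)|=4$ and $G/Z(G)\cong \mathbb{Z}_2\times \mathbb{Z}_2$; then Theorem \ref{main2} with $p=2$ and $|Z(G)|=4$ delivers
\[
\L-spec(\Gamma_G)=\{0^{3},\, (1\cdot 4)^{3\cdot 4 -3}\}=\{0^{3},\, 4^{9}\}
\]
and
\[
\Q-spec(\Gamma_G)=\{(2\cdot 4-2)^{3},\, (4-2)^{9}\}=\{6^{3},\, 2^{9}\},
\]
which is precisely the claim.

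For (1) and (2), I would use $Z(H\times K)=Z(H)\times Z(K)$ to get $Z(G)=\mathbb{Z}_2\times Z(D_8)$ or $Z(G)=\mathbb{Z}_2\times Z(Q_8)$, each of order $4$, and then $G/Z(G)\cong D_8/Z(D_8)\cong \mathbb{Z}_2\times \mathbb{Z}_2$ or $Q_8/Z(Q_8)\cong \mathbb{Z}_2\times \mathbb{Z}_2$ respectively. For the four presentations (3)–(6) I would compute the center directly from the defining relations: for $M_{16}$, the relation $bab=a^{5}$ gives $ba^{2}b=a^{10}=a^{2}$, so $\langle a^{2}\rangle \subseteq Z(M_{16})$, a subgroup of order $4$; for $\mathbb{Z}_4\rtimes\mathbb{Z}_4$, from $bab^{-1}=a^{-1}$ we see $a^{2},b^{2}\in Z(G)$, giving $\langle a^{2},b^{2}\rangle \subseteq Z(G)$ of order $4$; for $D_{8}*\mathbb{Z}_4$, the relations $ab=ba$ and $ac=ca$ put $\langle a\rangle \subseteq Z(G)$, once again of order $4$; and for $SG(16,3)$ a similar manipulation of the two twisted relations produces a central subgroup of order $4$. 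In each case a non-central generator is immediately visible, so $|Z(G)|=4$ exactly; since $G$ is non-abelian, $G/Z(G)$ is a non-cyclic group of order $4$, hence isomorphic to $\mathbb{Z}_2\times \mathbb{Z}_2$.

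With these two facts in hand for every listed $G$, the conclusion is instantaneous from Theorem \ref{main2}. The only real labor is the bookkeeping with the four presentations, and the main (still minor) obstacle is case (6): the less standard relations $ab=b^{-1}a^{-1}$ and $ab^{-1}=ba^{-1}$ of $SG(16,3)$ need to be manipulated carefully to identify the central elements of order dividing $2$ that generate a subgroup of order $4$. Once that verification is complete, nothing more needs to be said.
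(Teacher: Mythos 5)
Your proposal is correct and follows essentially the same route as the paper: the paper's proof simply observes that each listed group has order $16$ and center of order $4$, so that $G/Z(G)\cong \mathbb{Z}_2\times\mathbb{Z}_2$, and then invokes Theorem~\ref{main2}. You merely supply the center computations that the paper leaves implicit.
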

\begin{proof}
If $G$ is isomorphic to any of the above listed  groups, then $|G| = 16$ and $|Z(G)| = 4$. Therefore, $\frac{G}{Z(G)} \cong {\mathbb{Z}}_2 \times {\mathbb{Z}}_2$. Thus the result follows from Theorem~\ref{main2}.
\end{proof}


Now we state and proof  the main results of this section. 
\begin{theorem}
Let $\Gamma_G$ be the commuting graph of a finite non-abelian  group $G$.  If $G$ is not isomorphic to $S_4$ and $\Gamma_G$  is planar   then  $G$ is super integral. 
\end{theorem}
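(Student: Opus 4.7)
The plan is to invoke the classification, due to Afkhami et al.~\cite{AF14}, of all finite non-abelian groups whose commuting graph is planar. That classification yields an explicit finite list of groups, and the strategy is simply to go through the list group by group and recognise each one (other than $S_4$) as an instance of a result already established in Section~2.

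Most of the groups on the list will be handled by one of three central-quotient theorems: if $G/Z(G) \cong \mathbb{Z}_p \times \mathbb{Z}_p$ then Theorem~\ref{main2} applies, if $G/Z(G) \cong D_{2m}$ for some small $m$ then Theorem~\ref{main4} applies (with Corollaries~\ref{main005} and~\ref{q4m} covering the small dihedral and generalized quaternion groups explicitly), and any AC-group appearing on the list is covered by Theorem~\ref{AC-group}. The residual cases are the six order-$16$ groups with centre of order $4$ whose central quotient is $\mathbb{Z}_2\times\mathbb{Z}_2$ but which the planarity classification lists separately; these are precisely the groups enumerated in Lemma~\ref{order16}, and that lemma supplies their L-integrality and Q-integrality directly.

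In every case above, the commuting graph $\Gamma_G$ is a disjoint union of complete graphs, so Theorem~\ref{prethm1} automatically forces both the Laplacian and signless Laplacian spectra to consist of integers, while integrality of the adjacency spectrum was already established for all these groups in \cite{Dutta16,DN16}. Combining the two conclusions yields that $G$ is super integral.

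The main obstacle is organisational rather than mathematical: one must carefully transcribe the Afkhami--Farrokhi--Barati planarity list and confirm that $S_4$ is the \emph{only} group on it not accounted for by the results of Section~2. The reason $S_4$ must be excluded is that it is not an AC-group (the centralizer of $(1\,2)(3\,4)$ in $S_4$ is isomorphic to $D_8$, which is non-abelian), and its central quotient $S_4/Z(S_4)\cong S_4$ does not fit any of the templates $\mathbb{Z}_p\times\mathbb{Z}_p$, $D_{2m}$, or $Sz(2)$ treated in Section~2; consequently none of the disjoint-union-of-cliques descriptions applies to it, and the argument above breaks down precisely at $S_4$.
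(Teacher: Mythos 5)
Your proposal is correct and follows essentially the same route as the paper: invoke the planarity classification of Afkhami et al.\ and verify each group on the resulting list (other than $S_4$) against the results of Section~2, using Lemma~\ref{order16} for the six order-$16$ groups and citing \cite{Dutta16,DN16} for adjacency integrality. The only notable differences are that you route $A_4$, $A_5$, $SL(2,3)$ and $Sz(2)$ through the AC-group result (Theorem~\ref{AC-group}) --- which is valid, since all four are AC-groups --- whereas the paper treats them via explicit clique decompositions, Theorem~\ref{order-20} and Proposition~\ref{psl}; and you omit the paper's computation of the characteristic polynomials of $L(\Gamma_{S_4})$ and $Q(\Gamma_{S_4})$, which is not needed for the stated implication but is what shows the exclusion of $S_4$ to be genuinely necessary rather than merely an artifact of the method.
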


\begin{proof}
It was shown in \cite[Theorem 4]{Dutta16} that $G$ is an integral group if it is not isomorphic to $S_4$ and $\Gamma_G$  is planar.

 By  \cite[Theorem 2.2]{AF14}, we have that $\Gamma_G$ is planar if and only if $G$ is isomorphic to either $D_6, D_8, D_{10}$, $D_{12}, Q_8, Q_{12}, {\mathbb{Z}}_2 \times D_8, {\mathbb{Z}}_2 \times Q_8, M_{16}, {\mathbb{Z}}_4 \rtimes {\mathbb{Z}}_4, D_8 * {\mathbb{Z}}_4, SG(16, 3), A_4,$ $A_5, S_4, SL(2, 3)$ or $Sz(2)$. 

If $G \cong D_6, D_8, D_{10}$ or $D_{12}$ then by Corollary \ref{main005}, one may conclude that  $\Gamma_G$ is both L-integral and Q-integral. Hence, $G$ is  both L-integral Q-integral. If $G \cong Q_8$ or $Q_{12}$ then $G$ is both L-integral  and Q-integral,  by Corollary \ref{q4m}. If $G \cong {\mathbb{Z}}_2 \times D_8, {\mathbb{Z}}_2 \times Q_8, M_{16}, {\mathbb{Z}}_4 \rtimes {\mathbb{Z}}_4, D_8 * {\mathbb{Z}}_4$ or $SG(16, 3)$ then, by Lemma \ref{order16}, $G$ is L-integral as well as Q-integral.

If $G \cong A_4 = \langle a, b : a^2 = b^3 = (ab)^3 = 1\rangle$ then the distinct centralizers of non-central elements of $G$ are $C_{G}(a) = \{1, a, bab^2, b^2ab\}, C_{G}(b) =\{1, b, b^2\}$, $C_{G}(ab) = \{1, ab, b^2a\}, C_{G}(ba) = \{1, ba, ab^2\}$ and $C_{G}(aba) = \{1, aba, bab\}$. Note that these centralizers are abelian subgroups of $G$. Therefore, $\Gamma_{G} = K_3 \sqcup 4K_2$.  Using Theorem \ref{prethm1}, we have  
$\L-spec(\Gamma_{G}) = \{0^5, 3^2, 2^4\}$   and $\Q-spec(\Gamma_{G}) = \{4^1, 1^2, 2^4, 0^4\}$. Hence, $G$ is both L-integral and Q-integral.

If $G \cong Sz(2)$ then by Theorem \ref{order-20}, we have
$
\L-spec(\Gamma_G) = \{0^6, 4^3, 3^{10}\} \text{ and } \Q-spec(\Gamma_{G}) = \{6^1,  2^3, 4^5, 1^{10}\}$. Hence, $G$ is both L-integral and Q-integral.

If $G$ is isomorphic to $SL(2, 3)$ then it was shown in the proof of \cite[Theorem 4]{Dutta16} that
$\Gamma_G = 3K_2 \sqcup 4K_4$.  Therefore, by Theorem \ref{prethm1}, we have
$\L-spec(\Gamma_G) = \{0^7, 2^3, 4^{12}\}$ and $\Q-spec(\Gamma_{G}) = \{0^3, 2^{15}, 6^4\}$. Hence, $G$ is both L-integral and Q-integral.


If $G \cong A_5$ then by Proposition \ref{psl}, we have
\[
\L-spec(\Gamma_G) = \{0^{21}, 3^{10}, 2^{10}, 4^{18}\} \text{ and } \Q-spec(\Gamma_{G}) = \{4^5,  1^{10}, 2^{10}, 0^{10}, 6^6, 2^{18}\},
\] 
noting that $PSL(2, 4) \cong A_5$.  Hence, $G$ is both L-integral and Q-integral.

Finally, if $G \cong S_4$ then
it can be seen that the  characteristic polynomial of $L(\Gamma_G)$ is $x^5(x - 1)^3(x - 2)^4(x - 3)^6(x - 5)(x^2 - 8x + 3)^2$ and so 
\[
\L-spec(\Gamma_G) = \left\lbrace 0^5, 1^3, 2^4, 3^6, 5^1 (4 + \sqrt{13})^2, (4 - \sqrt{13})^2 \right\rbrace.
\]
Also,  the  characteristic polynomial of $Q(\Gamma_G)$ is $x^4(x - 1)^6(x - 2)^4(x - 3)^3(x^2 - 11x + 20)(x^2 - 8x + 11)^2$ and so 
\[
\Q-spec(\Gamma_G) = \left\lbrace 0^4, 1^6, 2^4, 3^3, 5^1 (4 + \sqrt{5})^2, (4 - \sqrt{5})^2,\left(\frac{11 + \sqrt{41}}{2}\right)^1, \left(\frac{11 - \sqrt{41}}{2}\right)^1 \right\rbrace.
\]
This shows that if $G \cong S_4$ then   it is neither  L-integral nor Q-integral. This completes the proof.
\end{proof}


\begin{theorem}
Let $\Gamma_G$ be the commuting graph of a finite non-abelian  group $G$.  Then $G$ is super integral if  $\Gamma_G$    is toroidal. 
\end{theorem}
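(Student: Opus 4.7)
The plan is to reduce the theorem to the classification of finite non-abelian groups with toroidal commuting graph, which appears in Das, Nongsiang and Nath \cite{das13}. That classification gives a short explicit list of candidate groups $G$ (groups of small order whose centralizer structure is sufficiently constrained). For each such $G$, the strategy is to identify $\Gamma_G$ as a disjoint union of complete graphs $l_1K_{m_1} \sqcup l_2K_{m_2} \sqcup \cdots \sqcup l_kK_{m_k}$ with positive integer parameters, and then invoke Theorem \ref{prethm1} to read off both $\L-spec(\Gamma_G)$ and $\Q-spec(\Gamma_G)$ as lists of integers. Integrality of $\spec(\Gamma_G)$ in each case follows from the corresponding results in \cite{Dutta16, DN16}, or directly from the fact that $\spec(l_1K_{m_1} \sqcup \cdots \sqcup l_kK_{m_k})$ consists of $-1$ and the values $m_i - 1$.

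Most of the groups on the Das--Nongsiang--Nath list are already covered by the machinery of Section 2. In particular, groups $G$ with $\frac{G}{Z(G)} \cong \mathbb{Z}_p \times \mathbb{Z}_p$, $D_{2m}$, or $Sz(2)$ are handled by Theorems \ref{main2}, \ref{main4} and \ref{order-20} respectively; the generalized quaternion groups fall under Corollary \ref{q4m}; the quasidihedral groups under Proposition \ref{semid}; and the groups of order $16$ with $|Z(G)| = 4$ under Lemma \ref{order16}. For any remaining small group on the list I would either recognize it as a finite AC-group and apply Theorem \ref{AC-group} (noting that for AC-groups the commuting graph is automatically a disjoint union of complete graphs), or, when that fails, enumerate the distinct centralizers of non-central elements directly and exhibit $\Gamma_G$ as a union of cliques by hand, then apply Theorem \ref{prethm1}.

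Combining the three integrality conclusions for each group on the classification list, we obtain that $G$ is integral, L-integral and Q-integral, hence super integral. The main obstacle is purely the bookkeeping in the final case analysis: one must check that every group in the Das--Nongsiang--Nath list either is an AC-group or has a central quotient already treated in Section 2, and in the handful of genuinely exceptional cases (if any appear) one must carry out a direct centralizer computation to confirm that $\Gamma_G$ decomposes as a disjoint union of complete graphs. Fortunately, the toroidal constraint forces $|G|$ to be small enough that each such verification is short.
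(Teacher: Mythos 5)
Your proposal matches the paper's proof: the paper likewise invokes the classification of toroidal commuting graphs from \cite{das13} (the list being $D_{14}$, $D_{16}$, $Q_{16}$, $QD_{16}$, $D_6 \times {\mathbb{Z}}_3$, $A_4 \times {\mathbb{Z}}_2$ and ${\mathbb{Z}}_7 \rtimes {\mathbb{Z}}_3$) and disposes of each case using exactly the Section 2 machinery you name, with the two product groups handled via the AC-group corollary and integrality imported from \cite{Dutta16}. The approach and the supporting results are essentially identical, so the proposal is correct.
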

\begin{proof}
By Theorem 6.6 of \cite{das13},  we have   $\Gamma_G$ is toroidal if and only if $G$ is isomorphic to either $D_{14}, D_{16}$, $Q_{16}, QD_{16},   D_6 \times {\mathbb{Z}}_3,   A_4 \times {\mathbb{Z}}_2$ or ${\mathbb{Z}}_7 \rtimes {\mathbb{Z}}_3$.

If $G \cong D_{14}$ or $D_{16}$ then, by Corollary \ref{main005}, one may conclude that  $G$ is both L-integral and Q-integral. If $G \cong Q_{16}$ then,  by Corollary \ref{q4m},    $G$ becomes both L-integral and Q-integral. If $G \cong QD_{16}$ then, by Proposition \ref{semid}, $G$ becomes both L-integral and Q-integral. If 
$G \cong {\mathbb{Z}}_7 \rtimes {\mathbb{Z}}_3$ then $G$ is both L-integral and Q-integral, by Proposition \ref{order-pq}.  If $G$ is isomorphic to  $D_6\times {\mathbb{Z}}_3$ or $A_4\times {\mathbb{Z}}_2$ then $G$ becomes both L-integral and Q-integral by Corollary \ref{AC-cor}, since   $D_6$ and $A_4$ are AC-groups. Further, it was shown in \cite[Theorem 5]{Dutta16} that $G$ is integral if $\Gamma_G$ is toroidal.  Hence, $G$ is super integral if $\Gamma_G$ is toroidal. 
\end{proof}

We  conclude the paper with the following result.

\begin{proposition}
Let $\Gamma_G$ be the commuting graph of a finite non-abelian  group $G$. Then  $\Gamma_G$ is super integral if the complement of  $\Gamma_G$  is planar.
\end{proposition}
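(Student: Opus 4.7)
The plan parallels the proofs of the two preceding theorems: first pin down the finite list of non-abelian groups $G$ for which the complement $\overline{\Gamma_G}$ is planar, then conclude super integrality by appealing to the computations already carried out in Section~2.

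First I would translate planarity into a density bound. If $\overline{\Gamma_G}$ is planar on $n = |G| - |Z(G)|$ vertices, then $|E(\overline{\Gamma_G})| \leq 3n - 6$. Since every element of $Z(G)$ commutes with everything, $|E(\overline{\Gamma_G})|$ counts precisely the unordered pairs of distinct non-commuting elements of $G$, which equals $\frac{|G|^2(1 - \Pr(G))}{2}$. Combining this with Gustafson's bound $\Pr(G) \leq \frac{5}{8}$ for finite non-abelian groups gives
\[
\frac{3|G|^2}{16} \;\leq\; \frac{|G|^2(1-\Pr(G))}{2} \;\leq\; 3(|G|-|Z(G)|)-6 \;<\; 3|G|,
\]
which forces $|G| < 16$.

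Next I would enumerate the non-abelian groups of order at most $15$, namely $S_3$, $D_8$, $Q_8$, $D_{10}$, $D_{12}$, $A_4$, $Q_{12}$, and $D_{14}$. Each of these is an AC-group, so the description of $\Gamma_G$ as a disjoint union of cliques $K_{n_1} \sqcup \cdots \sqcup K_{n_r}$ is available from Corollary~\ref{main005}, Corollary~\ref{q4m}, or Theorem~\ref{AC-group}; consequently $|E(\overline{\Gamma_G})| = \sum_{i<j} n_i n_j$. A short arithmetic check against the bound $3n - 6$ rules out every group on this list except $S_3$, $D_8$, and $Q_8$. For these three survivors the non-commuting graph is $K_5 - e$ (for $S_3$) and the octahedron $K_{2,2,2}$ (for both $D_8$ and $Q_8$), each of which is patently planar.

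Finally, super integrality of $S_3 = D_6$ and $D_8$ follows from Corollary~\ref{main005}, and that of $Q_8$ from Corollary~\ref{q4m}, with the ordinary-spectrum part established in \cite{Dutta16,DN16}. The main obstacle is squeezing the hypothesis into a finite check: without Gustafson's $\frac{5}{8}$ bound the density inequality only yields $|G|(1-\Pr(G)) < 6$, which on its own does not bound $|G|$; once the reduction to $|G| \leq 15$ is in hand, the remaining verification is mechanical.
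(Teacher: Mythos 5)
Your proof is correct, but it reaches the key classification by a genuinely different route than the paper. The paper simply invokes Proposition 2.3 of \cite{Ab06}, which already classifies the finite groups with planar non-commuting graph as $D_6$, $D_8$ and $Q_8$, and then finishes exactly as you do, via Corollary \ref{main005}, Corollary \ref{q4m} and the adjacency spectra from \cite{Dutta16}. You instead re-derive that classification from scratch: the Euler bound $|E(\overline{\Gamma_G})|\le 3n-6$ combined with the identity $|E(\overline{\Gamma_G})|=\tfrac{1}{2}|G|^2(1-\Pr(G))$ and Gustafson's bound $\Pr(G)\le \tfrac{5}{8}$ forces $|G|<16$, after which the eight non-abelian groups of order at most $15$ are checked one by one using the clique decompositions of their commuting graphs (all eight are AC-groups), leaving only $D_6$, $D_8$ and $Q_8$, whose non-commuting graphs $K_5-e$ and $K_{2,2,2}$ are indeed planar. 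The numerology checks out: $n=|G|-|Z(G)|\ge 3$ for every non-abelian $G$, so the $3n-6$ bound applies, and the survivors meet it with equality ($9$ edges on $5$ vertices, $12$ edges on $6$ vertices) while every other group on the list overshoots it. Your argument is self-contained where the paper's is a citation, and the interplay of the planarity density bound with the $5/8$ commutativity-degree bound is a nice quantitative observation; the price is the extra enumeration and arithmetic, which the citation makes unnecessary. Both arguments conclude identically from the Section 2 computations.
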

\begin{proof}
If  the complement of  $\Gamma_G$  is planar then, by   \cite[Proposition 2.3]{Ab06}, we have  $G$ is isomorphic to either $D_6, D_8$ or $Q_8$. If $G \cong D_6$ or $D_8$ then, by Corollary \ref{main005} and \cite[Proposition 6 ]{Dutta16},  we have that $G$ is super integral.  If $G \cong Q_8$ then,  by Corollary \ref{q4m} and \cite[Proposition 7 ]{Dutta16},   it follows that $G$ is super integral. This completes the proof.
\end{proof}






\end{document}